\documentclass[reqno]{amsart}
\pagestyle{plain}

%\pdfoutput=1
\usepackage{mathrsfs}
\usepackage{amsthm}
\usepackage{amscd}
\usepackage{amsmath}
\usepackage{amssymb}
\usepackage{extarrows}
\usepackage[colorlinks, linkcolor=blue]{hyperref}

\theoremstyle{definition}
\newtheorem{defn}{Definition}[section]

\theoremstyle{plain}
\newtheorem{thm}{Theorem}[section]
\newtheorem*{thm*}{Theorem}
\newtheorem*{thmA}{Theorem~A}
\newtheorem*{thmB}{Theorem~B}
\newtheorem{prop}[thm]{Proposition}
\newtheorem{lem}[thm]{Lemma}

\newtheorem{cor}[thm]{Corollary}

\theoremstyle{remark}
\newtheorem*{rmk}{Remark}

\newcommand{\mbbr}{\mathbb{R}}
\newcommand{\mbbt}{\mathbb{T}}
\newcommand{\mbbz}{\mathbb{Z}}

\newcommand{\mcla}{\mathcal{A}}
\newcommand{\mclb}{\mathcal{B}}
\newcommand{\mclm}{\mathcal{M}}

\newcommand{\mscm}{\mathscr{M}}

\newcommand{\tf}{\tilde{f}}

\newcommand{\dif}{\mathrm{d}\:\!}

\def \esssup {\mathrm{esssup}}
\def \essinf {\mathrm{essinf}}

\title{Regularity of calibrated sub-actions for circle expanding maps and Sturmian optimization}
\author{Rui Gao}
\address{Rui Gao: College of Mathematics, Sichuan University, Chengdu 610064, China}
\email{gaoruimath@scu.edu.cn}
\begin{document}

\begin{abstract}
  In this short and elementary note, we study some ergodic optimization problems for circle expanding maps. We first make an observation that if a function is not far from being convex, then its calibrated sub-actions are closer to convex functions in certain effective way. As an application of this simple observation,  for circle doubling map, we generalize a result of Bousch saying that translations of the cosine function are uniquely optimized by Sturmian measures. Our argument follows the mainline of Bousch's original proof, while some technical part is simplified by the observation mentioned above, and no numerical calculation is needed. %The whole argument is elementary, and self-contained and no numerical calculation is needed.
\end{abstract}

\maketitle

  \section{Introduction}

Let $X$ be a compact metric space and let $C(X)$ denote the collection of real-valued continuous functions on $X$. Let $T:X\to X$ be a continuous map. For the topological dynamical system $(X,T)$, let $\mclm(X,T)$ denote the collection of $T$-invariant Borel probability measures on $X$. The {\bf ergodic optimization} problem of $(X,T)$ is concerned about, for given $f\in C(X)$, the extreme values (maximum or minimum) of $\mu\mapsto\int_X f\dif\mu$ for $\mu\in\mclm(X,T)$, and which measures attain the maximum or minimum. For an overview of the research topic of ergodic optimization, one may refer to Jenkinson's survey papers \cite{Jen07,Jen19}, and Bochi's survey paper \cite{Boc18}. See also Contreras, Lopes, and Thieullen \cite{CLT01}, and Garibaldi \cite{Gar17}.

Since
\[
\min_{\mu\in\mclm(X,T)}\int_X f\dif\mu=-\max_{\mu\in\mclm(X,T)}\int_X (-f)\dif\mu,
\]
to be definite, we shall mainly focus on the maximum problem and denote
\[
\beta(f):=\max_{\mu\in \mclm(X,T)}\int_X f\dif\mu,
\]
\[
\mclm_{max}(f):=\{\mu\in \mclm(X,T) : {\textstyle \int_X} f\dif\mu=\beta(f) \}.
\]
Measures contained in $\mclm_{max}(f)$ are called {\bf maximizing measures} of $f$ for $(X,T)$. Given $f\in C(X)$, a basic tool to study maximizing measures is usually referred to as  {\bf Ma\~{n}\'{e}'s lemma}, which aims at establishing existence of $g\in C(X)$, called a {\bf sub-action} of $f$, such that
\[
 \tf:= f+g-g\circ T\le \beta(f)
\]
holds. One of its advantages is based on a simple observation as follows. Once such $g$ exists, then $\beta(f)=\beta(\tf)$ and $\mclm_{max}(f)=\mclm_{max}(\tf)$; moreover, for any $\mu\in \mclm_{max}(f)$, $\tf=\beta(f)$ holds on the support of $\mu$. For a review of results on existence or construction of sub-actions, one may refer to \cite[\S~6]{Jen19}; see also \cite[Remark~12]{CLT01}.

Given $f\in C(X)$, a particular way to construct sub-action $g$ of $f$ is to solve the equation below:
\begin{equation}\label{eq:calib defn}
  g(x)+\beta(f)=\max_{Ty =x}(f(y)+g(y)), \quad \forall\, x\in X.
\end{equation}
A solution $g\in C(X)$ to Equation~\eqref{eq:calib defn} is automatically a sub-action of $f$, and it is called a {\bf calibrated sub-action} of $f$.  It is well known that when $(X,T)$ is expanding and $f$ is  H\"{o}lder continuous, for the one-parameter family of potentials $(tf)_{t>0}$, the zero-temperature limit points (as $t\to +\infty$) of their equilibrium states are contained in $\mclm_{max}(f)$; see, for example, \cite[Theorem~4.1]{Jen19} and references therein. Calibrated sub-actions arise naturally in this limit process; see, for example, \cite[Proposition~29]{CLT01}.  For expanding $(X,T)$ and H\"{o}lder continuous $f$, the first proofs on existence of calibrated sub-actions were given by, among others, Savchenko \cite{Sav99} and Bousch \cite{Bou00} with different approaches. Savchenko's proof is based on a zero-temperature limit argument mentioned above; while Bousch's proof is to treat \eqref{eq:calib defn} as a fixed point problem and to apply Schauder-Tychonoff fixed point theorem.

In the first half of this paper, for a smooth expanding system $(X,T)$, given an observable in $C(X)$ with certain smoothness, we are interested in the regularity (or smoothness) of its calibrated sub-actions. For simplicity, let us focus on the simplest case that $T:X\to X$ is a linear circle expanding map; more precisely, $X=\mbbt:=\mbbr/\mbbz$ is the unit circle, and $Tx=dx$ for $x\in \mbbt$, where $d\ge 2$ is an integer. Our main results in this part are Theorem~\ref{thm:calib} and  Corollary~\ref{cor:calib reg}, and they roughly read as follows.
\begin{thmA}
   For $f\in C(\mbbt)$ identified as a function on $\mbbr$ of period $1$, if $f$ can be written as a sum of a convex function and a quadratic function, then any calibrated sub-action $g$ of $f$ for the $x\mapsto dx$ system can be written in the same form. In particular, both one-sided derivatives of $g$ exist everywhere and only allow at most countably many discontinuities, and the set of points where $g$ is not differentiable is also at most countable.
\end{thmA}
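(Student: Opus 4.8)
The plan is to avoid any cone‑invariance or fixed‑point argument — such an argument would only produce \emph{one} calibrated sub‑action of the desired form — and instead to extract the conclusion for an \emph{arbitrary} calibrated sub‑action $g$ directly from the identity $g=\mathcal{L}^{n}g$, valid for every $n\ge 1$, where $\mathcal{L}\phi(x):=\max_{Ty=x}\bigl(f(y)+\phi(y)\bigr)-\beta(f)$. A standard iteration of \eqref{eq:calib defn} unwinds this identity to
\[
g(x)+n\beta(f)=\max_{0\le k<d^{n}}\Bigl(S_{n}f\bigl(\tfrac{x+k}{d^{n}}\bigr)+g\bigl(\tfrac{x+k}{d^{n}}\bigr)\Bigr),\qquad S_{n}f:=\textstyle\sum_{j=0}^{n-1}f\circ T^{j},
\]
where I read $g$ and $S_{n}f$ as their $1$-periodic lifts to $\mbbr$, and where, for $x$ ranging over any fixed interval of length $1$, the arguments $\tfrac{x+k}{d^{n}}$ with $0\le k<d^{n}$ run over the $d^{n}$ preimages of $x$ under $T^{n}$, all lying in a single fundamental domain.

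First I would normalize $f$. Write $f=c+q$ with $c$ convex on $\mbbr$ and $q$ quadratic; absorbing the affine part of $q$ into $c$ we may take $q(x)=ax^{2}$, and if $a\ge 0$ then $f$ is convex and $1$-periodic, hence constant, so in all cases we may write $f=c_{f}-\lambda x^{2}$ with $c_{f}\colon\mbbr\to\mbbr$ convex and $\lambda\ge 0$. The reason for insisting on this form is the elementary but decisive computation
\[
S_{n}f(y)=A_{n}(y)-\Lambda_{n}y^{2},\qquad A_{n}(y):=\sum_{j=0}^{n-1}c_{f}(d^{j}y),\quad \Lambda_{n}:=\lambda\,\frac{d^{2n}-1}{d^{2}-1},
\]
in which $A_{n}$ is \emph{convex} on $\mbbr$ (a finite sum of convex functions precomposed with dilations): the concavity present in $f$ is amplified by Birkhoff summation, but in a completely explicit way.

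The core of the argument is then a one‑line estimate together with a one‑line convexity observation, carried out on a fixed compact interval $I$ — which I enlarge so that $I=[a,a+1]$ — and followed by $n\to\infty$. On the one hand, since $g$ is uniformly continuous and $\bigl|\tfrac{x+k}{d^{n}}-\tfrac{a+k}{d^{n}}\bigr|\le d^{-n}$ for $x\in I$, replacing each $g\bigl(\tfrac{x+k}{d^{n}}\bigr)$ inside the maximum by the constant $g\bigl(\tfrac{a+k}{d^{n}}\bigr)$ alters $g(x)$ by at most $\omega_{g}(d^{-n})$, where $\omega_{g}$ is the modulus of continuity of $g$; calling the resulting function $G_{n}$, this gives $\|g-G_{n}\|_{L^{\infty}(I)}\le\omega_{g}(d^{-n})\to 0$. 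On the other hand, since $\Lambda_{n}\bigl(\tfrac{x+k}{d^{n}}\bigr)^{2}$ equals $\tfrac{\Lambda_{n}}{d^{2n}}x^{2}$ plus a function affine in $x$, each term $S_{n}f\bigl(\tfrac{x+k}{d^{n}}\bigr)+g\bigl(\tfrac{a+k}{d^{n}}\bigr)+\tfrac{\Lambda_{n}}{d^{2n}}x^{2}$ is $A_{n}$ composed with an affine map, minus an affine function, plus a constant, hence convex in $x$; taking the maximum over $k$ and subtracting $n\beta(f)$ shows that $G_{n}+\tfrac{\Lambda_{n}}{d^{2n}}x^{2}$ is convex on $\mbbr$, and as $\tfrac{\Lambda_{n}}{d^{2n}}=\tfrac{\lambda}{d^{2}-1}\bigl(1-d^{-2n}\bigr)\le\mu:=\tfrac{\lambda}{d^{2}-1}$, so is $G_{n}+\mu x^{2}$. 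Letting $n\to\infty$ and using that a uniform limit of convex functions is convex, I conclude that $g+\mu x^{2}$ is convex on $I$; as $I$ was an arbitrary compact interval, $g+\mu x^{2}$ is convex on all of $\mbbr$. Hence $g=(g+\mu x^{2})-\mu x^{2}$ is a convex function plus a quadratic, with the explicit constant $\mu=\lambda/(d^{2}-1)$; the assertions about one‑sided derivatives are then just the classical regularity of a real convex function on $\mbbr$ — finite one‑sided derivatives everywhere, each non‑decreasing (hence with at most countably many jump discontinuities), and differentiability off an at most countable set — transported through the smooth correction $-\mu x^{2}$.

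The step I expect to be the genuine obstacle — and the reason this is not simply a cone estimate — is the presence of the \emph{a priori unregularized} $g$ inside the iterated maximum: a continuous function need not be a convex function plus a quadratic on any subinterval, so there is no cone to iterate. The resolution, and the heart of the matter, is that in the $n$-th unwound identity $g$ is only ever evaluated on intervals of length $d^{-n}$, on which uniform continuity makes it uniformly within $\omega_{g}(d^{-n})$ of a constant (and constants lie trivially in the target class), while the genuinely structured part $S_{n}f$, precomposed with a $d^{-n}$-contraction, has semiconvexity constant $\Lambda_{n}/d^{2n}$ bounded by $\lambda/(d^{2}-1)$ uniformly in $n$; the vanishing error $\omega_{g}(d^{-n})$ then rules out any downward irregularity of $g$ in the limit. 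The remaining ingredients — the unwinding of \eqref{eq:calib defn}, the identity for $S_{n}f$, and the affine bookkeeping for the quadratic term — are routine.
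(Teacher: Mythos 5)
Your argument is correct, and it arrives at exactly the paper's quantitative conclusion --- that $g+\tfrac{\lambda}{d^{2}-1}x^{2}$ is convex whenever $f+\lambda x^{2}$ is, i.e.\ $\eta(g)\le(d^{2}-1)^{-1}\eta(f)$ in the paper's normalization --- but by a noticeably different packaging. The paper proves Theorem~A via Theorem~\ref{thm:calib}: it introduces the defect functionals $\xi_\delta^x$, $\xi_\delta^*$, $\eta$, shows (Lemma~\ref{lem:cone}, Proposition~\ref{prop:max op}) that they are subadditive and stable under pointwise maxima and that the operator $\mscm_d$ contracts them by rescaling $\delta\mapsto d^{-1}\delta$, and then iterates the one-step inequality \eqref{eq:xi cali} coming from $g+\beta(f)=\mscm_d(f+g)$, summing a geometric series and using $\xi^*_{d^{-n}\delta}(g)\to 0$ by uniform continuity of $g$. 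You instead unwind \eqref{eq:calib defn} $n$ times to the Birkhoff sum, use the explicit decomposition $f=c_f-\lambda x^{2}$ to see that $S_nf$ precomposed with an inverse branch is convex up to a quadratic of coefficient $\Lambda_n/d^{2n}=\tfrac{\lambda}{d^{2}-1}(1-d^{-2n})$, freeze $g$ at base points at the cost $\omega_g(d^{-n})$, and pass to a uniform limit of convex surrogates $G_n$. The two proofs rest on the same mechanism --- the concavity defect contracts by $d^{-2}$ per inverse-branch step and sums to $1/(d^{2}-1)$, while uniform continuity of $g$ kills its contribution at scale $d^{-n}$ --- and all the individual steps you use (the $n$-fold unwinding, convexity of $A_n$ and of maxima of convex functions, local-to-global convexity, uniform limits) check out, including the normalization disposing of the case where the quadratic part is convex. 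What the paper's formulation buys is modularity: the intermediate inequality \eqref{eq:xi cali} and the functional $\eta$ are reused verbatim in the Sturmian section (e.g.\ in the proof of Theorem~\ref{thm:sturm}), whereas your argument, while more self-contained and explicit, delivers only the final convexity statement; conversely, your route avoids introducing the $\xi$/$\eta$ calculus altogether, which is arguably more elementary for Theorem~A in isolation.
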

It might be interesting to compare the statement above with the following result of Bousch and Jenkinson \cite[Theorem~B]{BouJen02}: for the $x\mapsto 2x$ system, there exists a real-analytic function such that any sub-action of it fails to be $C^1$.

In the second half of this paper, as an application of Theorem~\ref{thm:calib}, we shall generalize the a classic result of Bousch~\cite{Bou00} roughly stated below with a simpler proof.

\begin{thm*}[{Bousch~\cite[TH\'{E}OR\`{E}ME A]{Bou00}}]
 For each $\omega\in\mbbt$, the function $x\mapsto\cos 2\pi(x-\omega)$ has a unique maximizing measure for the $x\mapsto 2x$ system; moreover, the maximizing measure is Sturmian, i.e., supported on a semi-circle.
\end{thm*}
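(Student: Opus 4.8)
The plan is to follow the mainline of Bousch's argument in \cite{Bou00}, using Theorem~\ref{thm:calib} to remove the regularity input that there required a numerical verification.

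\emph{Step 1: reduction to a problem about semicircles.} Fix $\omega$ and set $f_\omega(x):=\cos 2\pi(x-\omega)$. Writing $f_\omega(x)=\big(\cos 2\pi(x-\omega)+2\pi^2(x-\omega)^2\big)-2\pi^2(x-\omega)^2$, the first summand has second derivative $4\pi^2\big(1-\cos 2\pi(x-\omega)\big)\ge 0$, so $f_\omega$ is a sum of a convex and a quadratic function; hence Theorem~\ref{thm:calib} furnishes a calibrated sub-action $g$ of the same form, and Corollary~\ref{cor:calib reg} gives that $g$ has one-sided derivatives everywhere with only countably many kinks. Put $\beta:=\beta(f_\omega)$ and $\tf_\omega:=f_\omega+g-g\circ T\le\beta$; then $\mclm_{max}(f_\omega)=\mclm_{max}(\tf_\omega)$, and every maximizing measure is carried by $K:=\{\tf_\omega=\beta\}$. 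Now exploit the special structure of the doubling map: the two $T$-preimages $y$ and $y+\tfrac12$ of any point are antipodal, so $f_\omega(y+\tfrac12)=-f_\omega(y)$. Substituting this into \eqref{eq:calib defn} and putting
\[
\psi(y):=2f_\omega(y)+g(y)-g(y+\tfrac12),
\]
one finds $\tf_\omega(y)=\beta-\max\{0,-\psi(y)\}$, and, by $1$-periodicity of $g$, $\psi(y+\tfrac12)=-\psi(y)$. Hence $K=\{\psi\ge 0\}$ and $K+\tfrac12=\{\psi\le 0\}$, so $K\cup(K+\tfrac12)=\mbbt$; in particular $|K|\ge\tfrac12$, and $K$ contains no antipodal pair except possibly inside $\{\psi=0\}$.

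\emph{Step 2: ``the fish has no bones''.} The crux is the lemma that $K=\{\psi\ge 0\}$ is a closed semicircle $[\theta,\theta+\tfrac12]$. Since $|K|\ge\tfrac12$ and $K$ carries no antipodal pair off $\{\psi=0\}$, this amounts to showing that $\{\psi>0\}$ is a single arc and that $\psi$ vanishes only finitely often. I would prove it using the regularity of $g$ from Theorem~\ref{thm:calib}: writing $g=\phi+q$ with $\phi$ convex and $q$ quadratic, the $1$-periodicity of $g$ together with $\psi(y+\tfrac12)=-\psi(y)$ forces $\phi'_\pm(y+1)-\phi'_\pm(y)$ to be a nonnegative constant, and, with the monotonicity of $\phi'_\pm$, this confines the one-sided derivatives of $h(y):=g(y)-g(y+\tfrac12)$ to a range strictly smaller than $4\pi=\|(2f_\omega)'\|_\infty$ (a bound also visible from the transfer-operator estimate $|g'_\pm|\le\|f_\omega'\|_\infty=2\pi$ obtained by differentiating \eqref{eq:calib defn} along winning backward branches); hence $\psi'=-4\pi\sin 2\pi(\cdot-\omega)+h'$ can vanish only near $\omega$ and $\omega+\tfrac12$, and a short convexity estimate on those two arcs excludes any extra oscillation of $\psi$. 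I expect this to be the main obstacle of the whole proof: it is exactly the point at which \cite{Bou00} appeals to a numerically checked inequality, and the role of Theorem~\ref{thm:calib} is that the convexity built into $g$ makes that inequality automatic. The cleanest organization is probably by contradiction: assume a spurious component of $\{\psi>0\}$ (a ``bone'') and contradict \eqref{eq:calib defn} on it.

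\emph{Step 3: Sturmian maximizers and uniqueness.} Assume now $K=[\theta,\theta+\tfrac12]$ is a closed semicircle. Since $\tf_\omega\equiv\beta$ on $K$, \emph{every} $T$-invariant measure carried by $K$ is maximizing, so $\mclm_{max}(f_\omega)$ is precisely the set of invariant measures carried by $K$. Restricting $T:x\mapsto 2x$ to $[\theta,\theta+\tfrac12]$ and identifying its endpoints (both sent to $2\theta$) produces an orientation-preserving circle homeomorphism $\bar T$; since the only antipodal pair in $[\theta,\theta+\tfrac12]$ is $\{\theta,\theta+\tfrac12\}$, any such $\mu$ is injectively carried forward by $T$ on its support and is thus an invariant measure of $\bar T$. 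But $\bar T$ is uniquely ergodic: if its rotation number is irrational this is classical; if it equals $p/q$, the periodic orbits of $\bar T$ are $T$-periodic orbits inside $K$ realizing the Sturmian combinatorics of rotation number $p/q$, of which the doubling map has exactly one. Hence $\mclm_{max}(f_\omega)$ reduces to a single measure; being carried by the unique minimal set of $\bar T$ — an order-preserving invariant subset of a semicircle, i.e. a Sturmian set — it is Sturmian and supported on a semicircle, which is the assertion.
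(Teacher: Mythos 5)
Your Steps 1 and 3 follow the same skeleton as the paper (and as Bousch): pass to a calibrated sub-action $g$, note that maximizing measures live on $K=\{R\ge 0\}$ where $R(y)=2f_\omega(y)+g(y)-g(y+\tfrac12)$, show $K$ is a closed semicircle, and invoke the fact that each closed semicircle carries exactly one invariant measure. Two side remarks: existence of $g$ comes from Lemma~\ref{lem:Lip cal}, not from Theorem~\ref{thm:calib}, which only constrains $g$ once it exists; and your rederivation of the Bullett--Sentenac fact is not right as stated ($T|_K$ maps $K$ onto all of $\mbbt$, so identifying endpoints yields a homeomorphism from $K/\!\sim$ onto $\mbbt$, not a self-map, and a circle homeomorphism with rational rotation number is not uniquely ergodic in general) --- but that part can simply be cited from \cite{BulSen94}, as the paper does via Lemma~\ref{lem:Sturmian cond}.

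The genuine gap is Step 2, which you yourself flag as ``the main obstacle'': you never prove that $R>0$ on an arc around $\omega$, and the assertion that the convexity of $g$ makes Bousch's numerically checked inequality ``automatic'' is exactly what has to be proved. The estimates you do invoke only control $h':=\bigl(g-g(\cdot+\tfrac12)\bigr)'$, hence only where $R$ can be monotone; the sign of $R$ near $\omega$ requires a bound on the \emph{values} of $g-g(\cdot+\tfrac12)$, and your ``short convexity estimate on those two arcs'' is never specified. In the paper this value bound is not automatic from convexity: it comes from Bousch's dynamical inequality \eqref{eq:R>0 reduced} (iterating the calibration equation along backward orbits, used with $n=2$), combined with $\eta(g)\le\tfrac13\eta(f)$ from Theorem~\ref{thm:calib} and Lemma~\ref{lem:deri des}, packaged as the (S1)--(S2) test of Lemma~\ref{lem:Bousch test} through Theorem~\ref{thm:sturm} and Corollary~\ref{cor:sturm antisym}; the cosine case is then the elementary check $\eta(f)=4\pi^2$, $f\in\mcla_{-1/8,1/8}$. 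For what it is worth, a purely ``static'' variant of your idea can be made to work for the cosine: since $h(y+\tfrac12)=-h(y)$, every point lies within $\tfrac14$ of a zero of $h$, and Lemma~\ref{lem:deri des} with $\eta(g)\le\tfrac{4\pi^2}{3}$ gives $\|h\|_\infty\le\tfrac{\eta(g)}{8}\le\tfrac{\pi^2}{6}$, so $R>0$ where $\cos 2\pi(y-\omega)>\tfrac{\pi^2}{12}$ (roughly $|y-\omega|<0.096$), while $R'<0$ a.e.\ where $\sin 2\pi(y-\omega)>\tfrac{\pi}{6}$ (roughly $|y-\omega|>0.088$), and the two arcs overlap, so Lemma~\ref{lem:Bousch test} applies --- but neither this computation nor the paper's route via \eqref{eq:R>0 reduced} appears in your write-up, so the central claim that $K$ is a semicircle (equivalently, the Sturmian condition) is not established.
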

Our main technical result in the second half of this paper is Theorem~\ref{thm:sturm}.  As a corollary of it, we can easily prove Theorem~\ref{thm:cosine-like} as a generalization of \cite[TH\'{E}OR\`{E}ME A]{Bou00}. A slightly weaker and easier to read version of Theorem~\ref{thm:cosine-like} is as follows.

\begin{thmB}
  Let $f:\mbbt\to\mbbr$ be a $C^2$ function with the following properties, where $f$ is identified as a function on $\mbbr$ of period $1$:
  \begin{itemize}
    \item $f(x+\frac{1}{2})=-f(x)$,
    \item $f(x)=f(-x)$,
    \item $f''\le 0$ on $[-\frac{1}{4},\frac{1}{4}]$,
    \item $f(0)>\frac{1}{40}\cdot \max_{x\in\mbbt}f''(x)$.
  \end{itemize}
  Then for each $\omega\in\mbbt$,  the function $x\mapsto f(x-\omega)$ has a unique maximizing measure for the $x\mapsto 2x$ system; moreover, the maximizing measure is Sturmian.
\end{thmB}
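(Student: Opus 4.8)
The plan is to fix $\omega\in\mbbt$, reduce the assertion to understanding the sign of a single explicit function on the circle, and then verify the required sign pattern using Theorem~\ref{thm:calib} (in its effective, quantitative form) together with the hypothesis $f(0)>\frac{1}{40}\max_{\mbbt}f''$. Throughout, $Tx=2x$ and $F:=f(\,\cdot-\omega)$. First I would record the elementary consequences of the symmetries: from $f(x+\tfrac12)=-f(x)$ and $f(-x)=f(x)$ one gets $f(\pm\tfrac14)=0$, $f\ge0$ on $[-\tfrac14,\tfrac14]$ with $\max_{[-1/4,1/4]}f=f(0)$, and $f''(x+\tfrac12)=-f''(x)$; hence $c:=\max_{\mbbt}f''=\max_{\mbbt}(-f'')\ge0$, so that $F$ is a sum of a convex function and a quadratic with convexity defect $c$, while $F(x+\tfrac12)=-F(x)$.

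\emph{Step 1 (reduction to a sign condition).} By Theorem~\ref{thm:calib}, $F$ has a calibrated sub-action $g\in C(\mbbt)$ for $(\mbbt,T)$ which is again convex-plus-quadratic, and by the effective version its convexity defect is at most an explicit multiple of $c$. Consequently $G:=g-g\circ(\,\cdot+\tfrac12)$ is Lipschitz with $\|G'\|_\infty$ bounded by an explicit multiple of $c$; and since $G(\,\cdot+\tfrac12)=-G$ and $G$ has period $1$, the same is true of $\|G\|_\infty$. Using the calibrated equation together with $F(\,\cdot+\tfrac12)=-F$, one computes $F+g-g\circ T=\beta(F)+\min(0,\psi)$, where $\psi(w):=(F+g)(w)-(F+g)(w+\tfrac12)=2F(w)+G(w)$; note $\psi(\,\cdot+\tfrac12)=-\psi$. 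In particular $\{F+g-g\circ T=\beta(F)\}=\{\psi\ge0\}$; and since $F+g-g\circ T\equiv\beta(F)$ on $\operatorname{supp}\mu$ for every $\mu\in\mclm_{max}(F)$ --- which, via the calibrated equation, forces every $x\in\operatorname{supp}\mu$ to satisfy $\psi(x)\ge0$ --- we obtain $\operatorname{supp}\mu\subseteq\{\psi\ge0\}$.

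\emph{Step 2 (what remains).} It now suffices to show that $\{\psi\ge0\}$ is contained in a closed semicircle: then every maximizing measure of $F$ is Sturmian, and uniqueness follows from the classical fact that the maximal $T$-invariant subset of a closed semicircle carries a unique invariant measure. Since $\psi$ is antisymmetric, $\{\psi\ge0\}$ lies in a closed semicircle as soon as $\psi>0$ on some open arc of length $\tfrac12$, and the natural candidate is an arc around $\omega$ close to $(\omega-\tfrac14,\omega+\tfrac14)$, this being precisely $\{2F>0\}$. (Steps 1--2 are naturally abstracted into a general criterion for the doubling map --- presumably the role of Theorem~\ref{thm:sturm} --- with the work in Step 3 amounting to checking its hypotheses.)

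\emph{Step 3 (the estimate, and the main obstacle).} On $[\omega-\tfrac14,\omega+\tfrac14]$ we have $2F\ge0$, so $\psi\ge G$; near $\omega$, $2F$ is close to $2f(0)>\tfrac{c}{20}$, which one checks exceeds the bound on $\|G\|_\infty$, so $\psi>0$ near $\omega$. Near the endpoints $\omega\pm\tfrac14$, concavity of $f$ on $[-\tfrac14,\tfrac14]$ forces $|f'(\pm\tfrac14)|\ge4f(0)$, so $|2F'|$ dominates $\|G'\|_\infty$ and $\psi$ is strictly monotone there --- indeed on the whole sub-arc of $[\omega-\tfrac14,\omega+\tfrac14]$ where $|f'|$ is bounded below by a suitable multiple of $c$. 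The genuinely delicate part --- and the main obstacle --- is the remaining ``intermediate'' sub-arc on which $f$ and $f'$ are simultaneously small: there neither the size of $2F$ nor the monotonicity argument controls the sign of $\psi=2F+G$, and one must combine (i) concavity, which makes ``$f'$ small'' force ``$f$ small'', so that $|2F|$ is comparable to $\|G\|_\infty$; (ii) the bound on the total variation of $G'$, available because $g$, and hence $G$, is convex-plus-quadratic; and (iii) the numerical constant $\tfrac1{40}$, which is tuned precisely so that the resulting chain of inequalities closes. This is exactly the step where Bousch's original proof for the cosine required a numerical verification, and the point of the present approach is that Theorem~\ref{thm:calib} supplies enough a priori regularity of the calibrated sub-action to replace that computation by clean explicit estimates. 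Finally, all the constants involved ($c$ and the bounds on $\|G\|_\infty,\|G'\|_\infty$) are independent of $\omega$, so the argument is uniform in $\omega$, giving the conclusion for every translate $f(\,\cdot-\omega)$.
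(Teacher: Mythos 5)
Your Step~1 is sound and matches the paper's framework: the calibrated equation for the doubling map does give $F+g-g\circ T=\beta(F)+\min(0,\psi)$ with $\psi=R_{F,g}=2F+G$, and Theorem~\ref{thm:calib} controls $\eta(g)$, hence $\|G'\|_\infty$ and $\|G\|_\infty$. The problem is Steps~2--3. You replace the paper's criterion (Lemma~\ref{lem:Bousch test}: $R>0$ on a \emph{short} arc $[a,b]$ plus $R$ strictly decreasing on the complementary arc $[b,a+\tfrac12]$, so that the single zero-crossing may sit at an \emph{unknown} point) by the stronger demand that $\psi>0$ on a full open semicircle ``close to $(\omega-\tfrac14,\omega+\tfrac14)$''. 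Since $\psi>0$ on one open semicircle forces $\psi<0$ on the antipodal one, the only semicircle that can work is exactly $(z-\tfrac12,z)$ where $z$ is the zero-crossing; taking it to be $(\omega-\tfrac14,\omega+\tfrac14)$ would force, by continuity and antisymmetry, $\psi(\omega+\tfrac14)=G(\omega+\tfrac14)=0$ exactly, i.e.\ $g(\omega+\tfrac14)=g(\omega+\tfrac34)$, an identity that has no reason to hold and fails whenever the crossing $z$ is not exactly $\omega+\tfrac14$ (it is not, in general: $z$ is dictated by the Sturmian combinatorics, not by the peak of $F$). If instead you shift to the true semicircle $(z-\tfrac12,z)$, part of it lies where $2F\le 0$, and there positivity of $\psi=2F+G$ requires a pointwise \emph{lower} bound on $G$ of the correct sign, which no bound on $\|G\|_\infty$, $\|G'\|_\infty$ or the variation of $G'$ can supply. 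This is precisely why the paper only proves positivity on a short arc (via the $n=2$ case of \eqref{eq:R>0 reduced}, which lower-bounds $G$ by $-\tfrac12\max_y\xi^y_{1/4}(f)-\tfrac1{96}\eta(f)$) and handles the rest by monotonicity, letting the crossing float.

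Beyond this structural issue, the decisive quantitative work is missing: your Step~3 explicitly concedes that on the ``intermediate sub-arc'' neither the size of $2F$ nor monotonicity closes the argument, and you only assert that ingredients (i)--(iii) and the constant $\tfrac1{40}$ ``close the chain of inequalities''. In the paper that closure is the actual content: verifying the hypotheses \eqref{eq:imply R>0}--\eqref{eq:imply R'<0} of Theorem~\ref{thm:sturm} after the symmetry reduction of Corollary~\ref{cor:sturm antisym}, which boils down to the calculus Lemma~\ref{lem:sturm search} producing a cut point $c$ with $h(\tfrac14)-2h(c)>\tfrac{\eta}{96}$ and $h'(c)>\tfrac{\eta}{12}$, and yielding the threshold $\kappa=\tfrac{7}{96}-\tfrac{\sqrt3}{36}<\tfrac1{40}$ from which Theorem~B follows. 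As written, your proposal neither carries out such an estimate nor can its chosen target (positivity on an essentially centered semicircle) be reached by the tools you allow yourself; you should instead verify (S1) on a short arc around $\omega$ and (S2) on the complementary arc, exactly in the spirit of Lemma~\ref{lem:Bousch test}.
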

To prove Theorem~\ref{thm:sturm}, we follow Bousch's proof of \cite[TH\'{E}OR\`{E}ME A]{Bou00}, while Theorem~\ref{thm:calib} is used to simplify some technical steps in Bousch's argument, especially \cite[LEMME TECHNIQUE 2]{Bou00}. No numerical calculation is needed in our argument.

The paper is organized as follows. In \S~\ref{se:calib}, we mainly prove Theorem~\ref{thm:calib}. The statements are given in \S~\ref{sse:calib result} and the proof is given in \S~\ref{sse:calib proof}. In \S~\ref{se:sturm}, we try to generalize \cite[TH\'{E}OR\`{E}ME A]{Bou00}; the main results are Theorem~\ref{thm:sturm} and its corollary Theorem~\ref{thm:cosine-like}. In \S~\ref{sse:Bousch}, we recall Bousch's ideas in his proof of \cite[TH\'{E}OR\`{E}ME A]{Bou00} that will be used for preparation.  In \S~\ref{sse:suff cond}, with the help of Theorem~\ref{thm:calib}, we prove Theorem~\ref{thm:sturm}. Finally,  Theorem~\ref{thm:cosine-like} is proved in \S~\ref{sse:cosine}.

The following notations or conventions are used throughout the paper.
\begin{itemize}
  \item For a topological space $X$, $C(X)$ denotes the collection of real-valued continuous functions on $X$
  \item For a metric space $X$, $Lip(X)$ denotes the collection of real-valued Lipschitz functions on $X$.
  \item For a (one-dimensional) smooth manifold (possibly with boundary) $X$, $C^k(X)$ denotes  the collection of real-valued functions that are continuously differentiable up to order $k$, where $k\ge 1$ is an integer.
  \item $\mbbt:=\mbbr/\mbbz$ denotes the unit circle. The following notations will be used when there is no ambiguity.
   \begin{itemize}
     \item For $x\in\mbbt$ and $y\in\mbbr$, $x + (y~\mathrm{mod}~1)\in\mbbt$ will be denoted by $x+y$ for short.
     \item For $a\in\mbbr$ and $b\in (a,a+1)$, the arc $\{x ~\mathrm{mod}~1: x\in [a,b]\}\subset \mbbt$ will be denoted by $[a,b]$ for short.
   \end{itemize}

  \item A function $f:\mbbt\to\mbbr$ will be frequently identified as a function on $\mbbr$ of period $1$.
  \item ``Lebesgue almost everywhere" is  abbreviated as ``a.e.".
%  \item ``\esssup" (``\essinf") stand for essential supremum (infimum) with respect to Lebesgue measure.
\end{itemize}
\section{Regularity of calibrated sub-actions}\label{se:calib}

Throughout this section, the dynamical system is fixed to be
\[
T_d:\mbbt\to\mbbt, \quad x\mapsto dx,
\]
where $d\ge 2$ is an integer. We focus on the regularity of calibrated sub-actions for this system. Let us begin with introducing the following notation to express the equation \eqref{eq:calib defn} of calibrated sub-action in a more convenient way.  Define $\mscm_d:C(\mbbt)\to C(\mbbt)$ as follows:
\begin{equation}\label{eq:max op}
  \mscm_d f(x):=\max_{T_dy=x}f(y)=\max_{0\le k<d}f(\tfrac{x+k}{d}),\quad \forall\, f\in C(\mbbt),
\end{equation}
where in the expression $f(\tfrac{x+k}{d})$, $f$ is considered as a function on $\mbbr$ of period $1$. Note that by definition, for $f\in C(\mbbt)$,
\[
g  \ \text{is a calibrated sub-action of}\ f \iff g\in C(\mbbt) \text{ and } g+\beta(f)=\mscm_d (f+g).
\]
Also note that $Lip(\mbbt)$ is $\mscm_d$-invariant.  The following is well known.

\begin{lem}\label{lem:Lip cal}
  For every $f\in Lip(\mbbt)$  there exists $g\in  Lip(\mbbt)$ such that $g$ is a calibrated sub-action of $f$. Moreover, if $f$ has a unique maximizing measure, then $g$ is uniquely determined by $f$ up to an additive constant.
\end{lem}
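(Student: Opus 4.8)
The plan is to treat the defining equation of a calibrated sub-action as a fixed point problem for the operator $\mscm_d$, following Bousch's approach. Let $L$ be a Lipschitz constant of $f$; if $L=0$ then $f$ is constant, every constant function is a calibrated sub-action of it, and the uniqueness hypothesis is vacuous, so assume $L>0$ and set $\kappa:=L/(d-1)$. Two elementary facts about $\mscm_d$ drive the argument: it is $1$-Lipschitz for the uniform norm, since $|\max_i a_i-\max_i b_i|\le\max_i|a_i-b_i|$; and it sends every $C$-Lipschitz function to a $(C/d)$-Lipschitz function, since each inverse branch $x\mapsto\tfrac{x+k}{d}$ of $T_d$ is $\tfrac1d$-Lipschitz and a finite maximum of $C$-Lipschitz functions is $C$-Lipschitz. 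Hence, if $g$ is $\kappa$-Lipschitz then $f+g$ is $(L+\kappa)$-Lipschitz, so $\mscm_d(f+g)$ is $\tfrac{L+\kappa}{d}$-Lipschitz, and $\tfrac{L+\kappa}{d}=\kappa$ by the choice of $\kappa$; also, a $\kappa$-Lipschitz function on $\mbbt$ has oscillation at most $\kappa/2$. I would therefore consider $\Phi_0(g):=\mscm_d(f+g)-\max_{\mbbt}\mscm_d(f+g)$, which is continuous for the uniform norm and maps the set $\mathcal K$ of those $g\in C(\mbbt)$ that are $\kappa$-Lipschitz and satisfy $-\kappa/2\le g\le 0$ into itself; this $\mathcal K$ is nonempty, convex, and compact in $C(\mbbt)$ by the Arzel\`a--Ascoli theorem. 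Schauder's fixed point theorem then produces $g_0\in\mathcal K$ with $\Phi_0(g_0)=g_0$, that is, $\mscm_d(f+g_0)=g_0+\lambda$ where $\lambda:=\max_{\mbbt}\mscm_d(f+g_0)\in\mbbr$; in particular $g_0\in Lip(\mbbt)$.

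It then remains to see that $\lambda=\beta(f)$, after which $g_0$ is the desired Lipschitz calibrated sub-action. Put $\tf:=f+g_0-g_0\circ T_d$. The identity $\mscm_d(f+g_0)=g_0+\lambda$ gives $f(y)+g_0(y)\le g_0(T_dy)+\lambda$ for every $y$, hence $\tf\le\lambda$; integrating against any $\mu\in\mclm_{max}(f)$ (a nonempty set, by weak-$\ast$ compactness of $\mclm(\mbbt,T_d)$) yields $\beta(f)=\int\tf\dif\mu\le\lambda$. For the reverse inequality I would run the equation backwards: from an arbitrary $x_0$, choose inductively $x_{n+1}\in T_d^{-1}(x_n)$ attaining the maximum that defines $\mscm_d(f+g_0)(x_n)$, so that $f(x_{n+1})+g_0(x_{n+1})=g_0(x_n)+\lambda$; summing gives $\sum_{n=1}^N f(x_n)=N\lambda+g_0(x_0)-g_0(x_N)$. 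The empirical measures $\nu_N:=\tfrac1N\sum_{n=1}^N\delta_{x_n}$ are asymptotically $T_d$-invariant, as $(T_d)_\ast\nu_N-\nu_N=\tfrac1N(\delta_{x_0}-\delta_{x_N})$, so every weak-$\ast$ accumulation point $\nu$ is $T_d$-invariant; and, $g_0$ being bounded, $\int f\dif\nu=\lambda$, so $\lambda\le\beta(f)$. This settles existence.

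For uniqueness, let $\mu$ be the unique maximizing measure and let $g_1,g_2$ be calibrated sub-actions of $f$; put $g:=g_1-g_2$, $M:=\max_{\mbbt}g$, $m:=\min_{\mbbt}g$, and $Z_M:=\{g=M\}$, $Z_m:=\{g=m\}$, both nonempty and closed. The key observation is that $Z_M$ reproduces itself under a suitable choice of preimage: if $x\in Z_M$ and $y\in T_d^{-1}(x)$ attains the maximum defining $\mscm_d(f+g_1)(x)$, then $f(y)+g_1(y)=g_1(x)+\beta(f)$, while $f(y)+g_2(y)\le g_2(x)+\beta(f)$ by calibration of $g_2$; subtracting these gives $g(y)\ge g(x)=M$, hence $g(y)=M$ and $y\in Z_M$. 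Iterating from some $x_0\in Z_M$ produces a backward orbit inside $Z_M$ along which $g_1$ telescopes as in the previous paragraph, so the weak-$\ast$ limit of its empirical measures is a $T_d$-invariant measure carried by the closed set $Z_M$ with $f$-integral $\beta(f)$; being a maximizing measure it equals $\mu$, and therefore $\mathrm{supp}(\mu)\subseteq Z_M$. Exchanging the roles of $g_1$ and $g_2$ --- using calibration of $g_2$ at points of $Z_m$ and telescoping $g_2$ --- gives $\mathrm{supp}(\mu)\subseteq Z_m$ likewise. Since $\mathrm{supp}(\mu)\ne\varnothing$ and $g$ equals both $M$ and $m$ there, we get $M=m$, so $g$ is constant, i.e.\ $g_1$ and $g_2$ differ by an additive constant.

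The routine parts are the Schauder set-up and the two weak-$\ast$ limit arguments, which are the standard manipulations around Ma\~{n}\'{e}'s lemma. The step I expect to demand the most care --- and would write out in full detail --- is the uniqueness argument, specifically the device of playing the maximum set $Z_M$ of $g_1-g_2$ off against its minimum set $Z_m$: each is shown to contain $\mathrm{supp}(\mu)$ by verifying that the empirical measures of a well-chosen backward orbit inside it converge to an invariant measure, which the telescoping identity forces to be maximizing, so that uniqueness of the maximizing measure finally makes the oscillation of $g_1-g_2$ collapse to zero.
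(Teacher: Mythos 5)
Your proof is correct and is essentially the argument the paper points to: the paper gives no proof of its own, citing \cite[Proposition~29\,(iii)]{CLT01} for existence and \cite[LEMME~C]{Bou00} for uniqueness, and your Schauder fixed-point construction for the existence part is exactly Bousch's approach, while your uniqueness argument (propagating the max/min sets of $g_1-g_2$ backwards, extracting an invariant measure carried by them that must be the unique maximizing measure) is the standard Lemme~C-style proof. I see no gaps: the normalization $\Phi_0$, the identification $\lambda=\beta(f)$ via the two inequalities, and the Portmanteau step placing $\mathrm{supp}(\mu)$ in the closed sets $Z_M$ and $Z_m$ are all sound.
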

For the existence of $g$, see, for example \cite[Proposition~29\,(iii)]{CLT01}. For the uniqueness part, see, for example \cite[LEMME~C]{Bou00}, which is stated for $d=2$, but the proof there is easily adapted to arbitrary $d\ge 2$ or more general systems; see also \cite[Proposition~6.7]{Gar17}.

\subsection{Statement of results}\label{sse:calib result}

To characterize regularity of calibrated sub-actions and to state Theorem~\ref{thm:calib} later on, we need the following quantity $\eta(\cdot)$ to measure how far a continuous function is  away from being convex.

\begin{defn}
  Given $f\in C(\mbbr)$, let $\eta(f)$ be the infimum  of $a>0$ (we adopt the convention that $\inf \varnothing=+\infty$) such that the following holds:
\[
f(x+\delta)+f(x-\delta)-2f(x) \ge -a\delta^2, \quad\forall\, x,\delta\in\mbbr.
\]
\end{defn}

By definition, the following is evident.
\begin{lem}\label{lem:eta convex}
 Given $f\in C(\mbbr)$ and $a\in[0,+\infty)$, $\eta(f)\le a$ iff $x\mapsto f(x)+\frac{a}{2} x^2$ is convex on $\mbbr$. %In particular, if $f\in C^1(\mbbr)$ and $f'$ is absolutely continuous on any finite interval, then
%$\eta(f)=\esssup_{x\in\mbbr}(-f'')$.
\end{lem}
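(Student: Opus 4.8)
The plan is to reduce the statement to the classical fact that a continuous midpoint-convex function on an interval is convex. Set $g_a(x):=f(x)+\tfrac{a}{2}x^2$ and, for $b\ge 0$, let $P(b)$ denote the property
\[
f(x+\delta)+f(x-\delta)-2f(x)\ge -b\delta^2\qquad\text{for all }x,\delta\in\mbbr,
\]
so that by definition $\eta(f)=\inf\{b>0 : P(b)\text{ holds}\}$. The first step is the elementary identity
\[
g_a(x+\delta)+g_a(x-\delta)-2g_a(x)=\bigl(f(x+\delta)+f(x-\delta)-2f(x)\bigr)+a\delta^2,
\]
which follows from $(x+\delta)^2+(x-\delta)^2-2x^2=2\delta^2$. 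It shows at once that $P(a)$ holds if and only if $g_a$ is midpoint convex on $\mbbr$, and since $g_a$ is continuous, the latter is equivalent to $g_a$ being convex on $\mbbr$.

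It therefore remains to prove that $\eta(f)\le a$ if and only if $P(a)$ holds. If $P(a)$ holds and $a>0$, then $a$ itself lies in the set whose infimum defines $\eta(f)$, so $\eta(f)\le a$; if $a=0$ and $P(0)$ holds, then $f(x+\delta)+f(x-\delta)-2f(x)\ge 0\ge -b\delta^2$ for every $b>0$, so $\eta(f)=0=a$. Conversely, suppose $\eta(f)\le a$. Then for every $\varepsilon>0$ there is some $b\in(0,a+\varepsilon)$ for which $P(b)$ holds, hence $f(x+\delta)+f(x-\delta)-2f(x)\ge -b\delta^2\ge -(a+\varepsilon)\delta^2$ for all $x,\delta$; fixing $x$ and $\delta$ and letting $\varepsilon\downarrow 0$ yields $P(a)$. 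Combined with the previous case, this shows $\eta(f)\le a$ if and only if $g_a$ is convex, as required.

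I expect no genuine obstacle in this argument. The only two points calling for a little care are the bookkeeping forced by the strict inequality $b>0$ in the definition of $\eta(f)$ — handled by the $\varepsilon$-limiting argument together with the separate treatment of the case $a=0$ above — and the appeal to the standard theorem that a continuous midpoint-convex function on an interval is convex.
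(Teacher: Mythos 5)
Your argument is correct: the identity $(x+\delta)^2+(x-\delta)^2-2x^2=2\delta^2$ reduces the condition defining $\eta(f)\le a$ to midpoint convexity of $f(x)+\frac{a}{2}x^2$, which together with continuity gives convexity, and your $\varepsilon$-bookkeeping for the infimum (including the case $a=0$) is handled properly. The paper states this lemma as evident by definition and offers no proof, and your write-up is precisely the standard argument the author has in mind.
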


It will be useful to introduce the following notations closely related to $\eta(\cdot)$. Given $\delta>0$,  $x\in\mbbr$ and $f\in C(\mbbr)$, denote
\begin{equation}\label{eq:xi defn}
  \xi_\delta^x(f):=\max\{2f(x)-f(x+\delta)-f(x-\delta),0\},
\end{equation}
\begin{equation}\label{eq:xi* defn}
   \xi_\delta^*(f):=\sup_{x\in\mbbr}\xi_\delta^x(f).
\end{equation}
Note that by definition,
\begin{equation}\label{eq:eta xi}
  \eta(f)=\sup_{\delta>0}\delta^{-2}\xi_\delta^*(f), \quad \forall\, f\in C(\mbbr).
\end{equation}
Also note that $\eta(f)$, $\xi_\delta^x(f)$ and $\xi_\delta^*(f)$ are well defined for $f\in C(\mbbt)$ considered as a function on $\mbbr$ of period $1$.

Now we are ready to state our main result in this section.

\begin{thm}\label{thm:calib}
Given $d\ge 2$, let $f\in C(\mbbt)$ and suppose that there exists a calibrated sub-action $g\in C(\mbbt)$ of $f$ for the $x\mapsto dx$ system. Then we have:
   \begin{equation}\label{eq:xi cali}
     \xi_\delta^*(g)\le \xi_{d^{-1}\delta}^*(f)+\xi_{d^{-1}\delta}^*(g), \quad \forall\, \delta>0,
   \end{equation}
   \begin{equation}\label{eq:eta cali}
     \eta(g)\le (d^2-1)^{-1}\cdot\eta(f).
   \end{equation}
%\begin{itemize}
%  \item  $\xi_\delta^*(g)\le \xi_{d^{-1}\delta}^*(f)+\xi_{d^{-1}\delta}^*(g)$;
%%  \item  $\zeta_\delta^*(g)\le d^{-1}\big(\zeta_{d^{-1}\delta}^*(f)+\zeta_{d^{-1}\delta}^*(g)\big)$;
%  \item  $\eta(g)\le (d^2-1)^{-1}\eta(f)$.
%\end{itemize}
\end{thm}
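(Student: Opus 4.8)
The plan is to derive \eqref{eq:xi cali} directly from the calibrated sub-action equation $g+\beta(f)=\mscm_d(f+g)$, and then obtain \eqref{eq:eta cali} as a formal consequence of \eqref{eq:xi cali} by iterating and passing to the supremum over $\delta$ via \eqref{eq:eta xi}.

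For \eqref{eq:xi cali}, fix $\delta>0$ and $x\in\mbbr$; I want to bound $2g(x)-g(x+\delta)-g(x-\delta)$ from above. Write $h:=f+g$, so that $g(y)+\beta(f)=\mscm_d h(y)=\max_{0\le k<d}h(\tfrac{y+k}{d})$ for every $y$. Choose the index $k$ achieving the maximum at the center point $x$, i.e. $g(x)+\beta(f)=h(\tfrac{x+k}{d})$. For the two neighboring points $x\pm\delta$, the same index $k$ is a (not necessarily optimal) competitor, so $g(x\pm\delta)+\beta(f)\ge h(\tfrac{x\pm\delta+k}{d})=h(\tfrac{x+k}{d}\pm\tfrac{\delta}{d})$. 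Subtracting, the $\beta(f)$ cancels and we get
\[
2g(x)-g(x+\delta)-g(x-\delta)\le 2h\bigl(\tfrac{x+k}{d}\bigr)-h\bigl(\tfrac{x+k}{d}+\tfrac{\delta}{d}\bigr)-h\bigl(\tfrac{x+k}{d}-\tfrac{\delta}{d}\bigr).
\]
Setting $x':=\tfrac{x+k}{d}$ and $\delta':=\tfrac{\delta}{d}$, the right-hand side is at most $\xi_{\delta'}^{x'}(h)\le \xi_{d^{-1}\delta}^*(h)$, and since $h=f+g$ one checks from \eqref{eq:xi defn} that $\xi_{\delta'}^{x'}(f+g)\le \xi_{\delta'}^{x'}(f)+\xi_{\delta'}^{x'}(g)$ (each bracket term splits, and $\max\{a+b,0\}\le\max\{a,0\}+\max\{b,0\}$), hence $\le \xi_{d^{-1}\delta}^*(f)+\xi_{d^{-1}\delta}^*(g)$. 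Since the left-hand side is also $\ge$ nothing forces it positive, but taking $\max\{\cdot,0\}$ and then the supremum over $x$ gives exactly \eqref{eq:xi cali}. (One should note that $\beta(f)=\beta(\tf)$ and the equation holds pointwise on all of $\mbbr$ after lifting, which is legitimate since everything is periodic.)

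For \eqref{eq:eta cali}, I iterate \eqref{eq:xi cali}: replacing $\delta$ by $d^{-1}\delta$ repeatedly, $\xi_{d^{-n}\delta}^*(g)\le \xi_{d^{-n-1}\delta}^*(f)+\xi_{d^{-n-1}\delta}^*(g)$, so by induction $\xi_\delta^*(g)\le \sum_{n\ge 1}\xi_{d^{-n}\delta}^*(f)+\lim_{n\to\infty}\xi_{d^{-n}\delta}^*(g)$. The tail term vanishes because $g$ is continuous (indeed uniformly continuous on $\mbbt$), so $\xi_{\delta'}^*(g)\to 0$ as $\delta'\to 0$. Now bound each summand using \eqref{eq:eta xi} in the form $\xi_{d^{-n}\delta}^*(f)\le \eta(f)\cdot d^{-2n}\delta^2$, giving $\xi_\delta^*(g)\le \eta(f)\,\delta^2\sum_{n\ge 1}d^{-2n}=\eta(f)\,\delta^2\cdot\frac{d^{-2}}{1-d^{-2}}=\frac{\eta(f)\,\delta^2}{d^2-1}$. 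Dividing by $\delta^2$ and taking the supremum over $\delta>0$, \eqref{eq:eta xi} yields $\eta(g)\le (d^2-1)^{-1}\eta(f)$, as claimed. If $\eta(f)=+\infty$ the inequality is trivial, so this also covers that case.

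The argument is essentially soft; the only place needing a little care is the very first step — choosing the optimal branch $k$ at the center $x$ and using it as a \emph{suboptimal} competitor at $x\pm\delta$, which is exactly what makes the second-difference inequality go the right direction. The subadditivity $\xi_{\delta}^{x}(f+g)\le\xi_{\delta}^{x}(f)+\xi_{\delta}^{x}(g)$ and the vanishing of the tail $\xi_{\delta'}^*(g)\to 0$ are routine. I expect no serious obstacle, but I would double-check that the maximum in $\mscm_d$ is attained (it is, by compactness/finiteness of the fiber) and that passing between the circle and its universal cover causes no index bookkeeping issues, since $k$ ranges over $\{0,\dots,d-1\}$ and the same representative works for nearby points once $\delta$ is small — and for large $\delta$ the periodicity of $f,g$ makes the bound automatic anyway.
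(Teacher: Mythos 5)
Your proof is correct and takes essentially the same route as the paper: your trick of picking the maximizing branch at the center point $x$ and using it as a suboptimal competitor at $x\pm\delta$ is exactly the content of the paper's Lemma~\ref{lem:cone} (the inequality $\xi_\delta^x(\max\{f,g\})\le\max\{\xi_\delta^x(f),\xi_\delta^x(g)\}$) combined with the rescaling $\xi_\delta^*(f_k)=\xi_{d^{-1}\delta}^*(f)$ of Proposition~\ref{prop:max op}, together with subadditivity of $\xi$. The iteration of \eqref{eq:xi cali}, the vanishing tail $\xi_{d^{-n}\delta}^*(g)\to 0$, and the geometric series giving $(d^2-1)^{-1}$ coincide with the paper's proof of \eqref{eq:eta cali}.
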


The proof of Theorem~\ref{thm:calib} will be given in the next subsection. Let us state a corollary of it first, which indicates, under the regularity condition $\eta(f)<+\infty$,  how regular a calibrated sub-action of $f$ could be. It should be mentioned that, as the referee pointed out to the author, in \cite[Lemme~B]{Bou00} Bousch already showed that for any $f\in Lip(\mbbt)$ and any calibrated sub-action $g\in C(\mbbt)$ of $f$, $g$ is automatically Lipschitz and  $lip(g)\le \frac{lip(f)}{d-1}$ holds, where $lip(\cdot)$ denotes the Lipschitz constant of a function (although Bousch only focused on $d=2$, his argument works equally well for general $d\ge 2$). The corollary below asserts that, under the additional assumption $\eta(f)<+\infty$, we can say something more about the regularity of $g$.

\begin{cor}\label{cor:calib reg}
Given $d\ge 2$, let $f\in Lip(\mbbt)$ be with $\eta(f)<+\infty$ and let $g\in C(\mbbt)$ be a calibrated sub-action  of $f$ for the $x\mapsto dx$ system. Then the following hold.
  \begin{itemize}
    \item Both of the one-sided derivatives  $g_\pm'(x):=\lim\limits_{\Delta\searrow 0}\frac{g(x\pm \Delta)-g(x)}{\pm\Delta}$ of $g$ exist at each $x\in\mbbt$.
    \item $g_\pm'$ are of bounded variation on $\mbbt$; that is to say, identifying $g_\pm'$ as functions on $\mbbr$ of period $1$, $g_\pm'$ are of bounded variation on finite subintervals of $\mbbr$.
    \item Given $x\in\mbbt$, $g'(x)$ does not exist iff  $g_+'(x)>g_-'(x)$; the set $\{x\in\mbbt: g'(x) \text{ does not exist}\}$ is at most countable.
  \end{itemize}
\end{cor}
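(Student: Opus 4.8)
The plan is to reduce the whole statement to classical one‑variable convex analysis. Since $f\in Lip(\mbbt)$ satisfies $\eta(f)<+\infty$, the bound \eqref{eq:eta cali} of Theorem~\ref{thm:calib} gives $\eta(g)\le (d^2-1)^{-1}\eta(f)<+\infty$. Put $a:=\eta(g)\in[0,+\infty)$ and define $h(x):=g(x)+\tfrac{a}{2}x^2$ for $x\in\mbbr$, where $g$ is viewed as a $1$-periodic function on $\mbbr$. By Lemma~\ref{lem:eta convex}, $h$ is convex on $\mbbr$. (In the degenerate case $a=0$ the function $g$ is itself convex and $1$-periodic, hence constant, and every assertion is trivial; so we may treat $a\ge 0$ uniformly.)

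Next I would quote the standard regularity of a convex function $h:\mbbr\to\mbbr$: at every $x$ both one-sided derivatives $h'_\pm(x)$ exist as monotone limits of difference quotients and are finite, with $h'_-(x)\le h'_+(x)\le h'_-(y)$ whenever $x<y$; in particular the maps $x\mapsto h'_\pm(x)$ are nondecreasing, hence of bounded variation on every compact interval, and $h$ is differentiable at $x$ precisely when $h'_+(x)=h'_-(x)$. Since a nondecreasing function has at most countably many discontinuities and $h'_+(x^-)\le h'_-(x)\le h'_+(x)$, at each continuity point of $h'_+$ one has $h'_-(x)=h'_+(x)$; thus $\{x\in\mbbr: h'_+(x)>h'_-(x)\}$ is at most countable. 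All of this transfers to $g$ through $g(x)=h(x)-\tfrac{a}{2}x^2$: since $x\mapsto\tfrac a2 x^2$ is $C^\infty$, the one-sided derivatives $g'_\pm(x)=h'_\pm(x)-ax$ exist everywhere; each $g'_\pm$, being $1$-periodic (as $g$ is) and equal on every compact interval to the difference of a nondecreasing function and a $C^1$ function, is of bounded variation on every finite subinterval of $\mbbr$; and $g'_+(x)-g'_-(x)=h'_+(x)-h'_-(x)\ge 0$, so $g$ fails to be differentiable at $x$ exactly when $g'_+(x)>g'_-(x)$, which is an at most countable set.

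I do not anticipate a genuine obstacle: the entire weight of the corollary sits in Theorem~\ref{thm:calib}, which guarantees $\eta(g)<+\infty$, after which Lemma~\ref{lem:eta convex} converts ``not far from convex'' into ``convex after a quadratic correction'' and textbook convex analysis finishes the job. The only points that deserve a sentence of care are the trivial case $\eta(g)=0$ and the remark that adding the non‑periodic quadratic $\tfrac a2 x^2$ does not damage the (periodic) bounded‑variation conclusion for $g'_\pm$, which it does not because bounded variation on compact intervals is stable under adding $C^1$ functions. One could also note in passing that Bousch's Lipschitz bound for $g$ is not actually needed for this corollary, since convexity of $h$ already forces $g$ to be locally Lipschitz.
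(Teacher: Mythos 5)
Your proposal is correct and follows essentially the same route as the paper: invoke \eqref{eq:eta cali} to get $\eta(g)<+\infty$, use Lemma~\ref{lem:eta convex} to write $g$ as a convex function minus the quadratic $\tfrac{\eta(g)}{2}x^2$, and then apply standard one-variable convex analysis; you merely spell out the ``basic properties of convex functions'' that the paper leaves implicit. No gaps.
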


\begin{proof}
  By \eqref{eq:eta cali} in Theorem~\ref{thm:calib}, $\eta(g)<+\infty$. Then according to Lemma~\ref{lem:eta convex}, identifying $g$ as a function on $\mbbr$ of period $1$, $x\mapsto g(x)+\frac{\eta(g)}{2}x^2$ is convex on $\mbbr$. All the statements follow from basic properties of convex functions.
\end{proof}

\subsection{Proof of Theorem~\ref{thm:calib}}\label{sse:calib proof}

This subsection is devoted to the proof of Theorem~\ref{thm:calib}. Let us begin with some common elementary properties shared by $\eta(\cdot)$ and functionals defined in \eqref{eq:xi defn} and \eqref{eq:xi* defn}.

\begin{lem}\label{lem:cone}
Let $X$ be a topological space and recall that $C(X)$ denotes the vector space of real-valued continuous functions on $X$. Let $\Gamma$ denote the collection of $\gamma:C(X)\to [0,+\infty]$ satisfying the following assumptions (we adopt the convention $0\cdot(+\infty)=0$ below):
  \begin{itemize}
    \item $\gamma(f+g)\le \gamma(f)+\gamma(g)$;
    \item $\gamma(\max\{f,g\})\le \max\{\gamma(f),\gamma(g)\}$;
    \item $\gamma(af+b)=a\gamma(f)$ for $a\ge 0$, $b\in\mbbr$.
  \end{itemize}
Then $\Gamma$ has the following properties:
\begin{itemize}
  \item if $a,b\ge 0$ and $\alpha,\beta\in\Gamma$, then $a\alpha+b\beta\in\Gamma$;
  \item if $\Lambda\subset \Gamma$, then $\sup_{\gamma\in\Lambda}\gamma\in\Gamma$.
\end{itemize}
Moreover, for $X=\mbbr$, $\xi_\delta^x\in\Gamma$ for any $\delta>0$, $x\in\mbbr$. As a result, $\xi_\delta^*\in\Gamma$ for any $\delta>0$ and hence $\eta\in\Gamma$.
\end{lem}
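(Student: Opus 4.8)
The plan is to verify each of the three stated closure/membership properties in turn, all of them by elementary manipulations.

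First I would prove the two closure properties of $\Gamma$. For a nonnegative linear combination $a\alpha+b\beta$ with $\alpha,\beta\in\Gamma$, the subadditivity $(a\alpha+b\beta)(f+g)\le (a\alpha+b\beta)(f)+(a\alpha+b\beta)(g)$ follows by adding $a$ times the inequality for $\alpha$ and $b$ times the inequality for $\beta$; the inequality for $\max\{f,g\}$ follows because $a\alpha(\max\{f,g\})+b\beta(\max\{f,g\})\le a\max\{\alpha(f),\alpha(g)\}+b\max\{\beta(f),\beta(g)\}\le \max\{a\alpha(f)+b\beta(f),\,a\alpha(g)+b\beta(g)\}$ — here one only needs the trivial fact that $a s+b t\le\max\{as'+bt',\,as''+bt''\}$ whenever $s\le\max\{s',s''\}$ and $t\le\max\{t',t''\}$, i.e. one picks the common index achieving the max in both; more carefully, if $\alpha(f)\le\alpha(g)$ and $\beta(f)\le\beta(g)$ then the bound is $a\alpha(g)+b\beta(g)$, and in the mixed cases one bounds by the larger of the two candidates. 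The positive-homogeneity-plus-translation axiom is immediate. For a family $\Lambda\subset\Gamma$ and $\gamma^*:=\sup_{\gamma\in\Lambda}\gamma$, subadditivity holds since $\gamma(f+g)\le\gamma(f)+\gamma(g)\le\gamma^*(f)+\gamma^*(g)$ for every $\gamma$, then take the sup over $\gamma$ on the left; the $\max$ axiom and the homogeneity axiom pass to the supremum in the same routine way (using $\sup_\gamma\max\{\gamma(f),\gamma(g)\}\le\max\{\gamma^*(f),\gamma^*(g)\}$ and $\sup_\gamma a\gamma(f)=a\sup_\gamma\gamma(f)$, valid also when $a=0$ by the stated convention).

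Next I would check $\xi_\delta^x\in\Gamma$ for fixed $\delta>0$, $x\in\mbbr$. Write $L(f):=2f(x)-f(x+\delta)-f(x-\delta)$, a linear functional on $C(\mbbr)$, so that $\xi_\delta^x(f)=\max\{L(f),0\}=L(f)^+$. Subadditivity: $L(f+g)^+=(L(f)+L(g))^+\le L(f)^++L(g)^+$. The $\max$ axiom: since $L$ has coefficients $2,-1,-1$ summing to $0$ and the positive part is $2$ while the negative part is $2$, one has $L(h)\le 2\,h(x)-h(x+\delta)-h(x-\delta)\le\big(\text{value at the larger of the three arguments}\big)$; concretely $L(\max\{f,g\})\le 2\max\{f,g\}(x)-f(x+\delta)-f(x-\delta)$ is not quite what is wanted, so instead I argue: for any $h$, $L(\max\{f,g\})=2\max\{f(x),g(x)\}-\max\{f,g\}(x+\delta)-\max\{f,g\}(x-\delta)\le 2\max\{f(x),g(x)\}-f(x+\delta)-f(x-\delta)$ and also $\le 2\max\{f(x),g(x)\}-g(x+\delta)-g(x-\delta)$; WLOG $f(x)\ge g(x)$, then the first bound gives $\le L(f)\le\xi_\delta^x(f)\le\max\{\xi_\delta^x(f),\xi_\delta^x(g)\}$, and since $\xi_\delta^x(\max\{f,g\})\ge0$ we are done. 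Finally $\xi_\delta^x(af+b)=\big(aL(f)+bL(1)\big)^+=\big(aL(f)\big)^+=a\,L(f)^+=a\,\xi_\delta^x(f)$ because $L(1)=2-1-1=0$ and $a\ge0$. Then $\xi_\delta^*=\sup_{x\in\mbbr}\xi_\delta^x\in\Gamma$ by the supremum closure, and $\eta=\sup_{\delta>0}\delta^{-2}\xi_\delta^*\in\Gamma$ by combining the scalar-multiple closure ($\delta^{-2}\ge0$) with the supremum closure, using \eqref{eq:eta xi}.

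I do not anticipate a genuine obstacle here; the statement is a formal bookkeeping lemma. The only point requiring a moment's care is the $\max$-subadditivity of $\xi_\delta^x$, where one must exploit that the functional $L$ places its single positive weight on the center point $x$, so that splitting on whether $f(x)\ge g(x)$ or $g(x)\ge f(x)$ lets one dominate $\max\{f,g\}$ at the positive-weight node while only weakening the (negatively weighted) side terms — and the trivial observation that $\xi_\delta^x$ is nonnegative by construction, so one never needs a lower bound on it.
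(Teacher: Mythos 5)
Most of your proposal is fine and, for the part the paper actually writes out, coincides with it: the verification that $\xi_\delta^x\in\Gamma$ via the positive part of the linear functional $L(f)=2f(x)-f(x+\delta)-f(x-\delta)$, with the case split on whether $f(x)\ge g(x)$ and the observation $L(1)=0$, is exactly the paper's argument (the paper proves only this point and leaves the rest to the reader); a small slip is that at the end you should invoke $\xi_\delta^x(f)\ge 0$, not $\xi_\delta^x(\max\{f,g\})\ge 0$, to pass from $L(\max\{f,g\})\le\xi_\delta^x(f)$ to $\xi_\delta^x(\max\{f,g\})=L(\max\{f,g\})^+\le\xi_\delta^x(f)$. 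The sup-closure argument and the deduction $\xi_\delta^*\in\Gamma$, $\eta=\sup_{\delta>0}\delta^{-2}\xi_\delta^*\in\Gamma$ are also correct.

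The genuine gap is in your proof of the first closure property, precisely at the ``trivial fact'': it is false that $as+bt\le\max\{as'+bt',\,as''+bt''\}$ whenever $s\le\max\{s',s''\}$ and $t\le\max\{t',t''\}$ (take $a=b=1$, $s=s'=1$, $s''=0$, $t=t''=1$, $t'=0$: then $s+t=2>1$). Your ``mixed cases'' clause is exactly the case that cannot be bounded: if $\alpha(f)<\alpha(g)$ but $\beta(g)<\beta(f)$, the quantity $a\alpha(g)+b\beta(f)$ need not be dominated by either candidate. Moreover this step cannot be repaired, because for $a,b>0$ the claimed closure under sums is false: take $X=\mbbr$, $\alpha=\xi_1^0$, $\beta=\xi_1^{10}$ (both in $\Gamma$), $f(t)=\max\{1-2|t|,0\}$ and $g(t)=f(t-10)$; then $(\alpha+\beta)(f)=(\alpha+\beta)(g)=2$ while $(\alpha+\beta)(\max\{f,g\})=2+2=4$, so $\alpha+\beta$ violates the max axiom. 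So on this bullet the lemma itself is only safe in the cases the paper actually uses, namely nonnegative scalar multiples ($b=0$, where $a\max\{u,v\}=\max\{au,av\}$ does the job) combined with sup-closure; your argument covers those cases and everything downstream ($\xi_\delta^*$, $\eta$, and the later applications, which use only subadditivity, the max axiom for a single functional, translation invariance and positive homogeneity) is unaffected. But as written, your derivation of the max axiom for $a\alpha+b\beta$ with two distinct functionals is invalid, and this should be flagged rather than smoothed over.
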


\begin{proof}
  All the statements are more or less evident and can be easily proved by checking definition directly. Let us only verify the relatively less obvious fact that
  \[
  \xi_\delta^x(\max\{f,g\}) \le \max\{\xi_\delta^x(f),\xi_\delta^x(g)\},
  \]
  and the rest are left to the reader. To prove the inequality above, denote $h:=\max\{f,g\}$. If $h(x)=f(x)$, then
  \[
   \xi_\delta^x(h)=\max\{2f(x)- h(x-\delta)-h(x+\delta),0\} \le \xi_\delta^x(f),
  \]
  where the ``$\le$" is due to $f\le h$; otherwise, $h(x)=g(x)$ and the same argument shows that $\xi_\delta^x(h)\le \xi_\delta^x(g)$. The proof is done.
\end{proof}

The key ingredient to Theorem~\ref{thm:calib} is the simple fact below. Recall the operator $\mscm_d$ defined in \eqref{eq:max op}.
\begin{prop}\label{prop:max op}
Given $d\ge 2$ and $f\in C(\mbbt)$, the following hold:
   \begin{equation}\label{eq:max xi}
    \xi_\delta^*(\mscm_d f)\le \xi_{d^{-1}\delta}^*(f), \quad \forall\, \delta>0,
  \end{equation}
  \begin{equation}\label{eq:max eta}
    \eta( \mscm_d f)\le d^{-2}\cdot\eta(f).
  \end{equation}
\end{prop}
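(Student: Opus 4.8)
The plan is to exploit the fact that, by its very definition, $\mscm_d f$ is a pointwise maximum of finitely many affine rescalings of $f$, and that the functional $\xi_\delta^x$ behaves well both under taking maxima (by Lemma~\ref{lem:cone}) and under affine changes of the variable; the two estimates \eqref{eq:max xi} and \eqref{eq:max eta} then fall out, the second being a formal consequence of the first via \eqref{eq:eta xi}.

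First I would fix $d\ge 2$ and $f\in C(\mbbt)$, regarded as a $1$-periodic function on $\mbbr$, and write
\[
\mscm_d f=\max_{0\le k<d}f_k,\qquad f_k(x):=f\!\left(\tfrac{x+k}{d}\right)\in C(\mbbr).
\]
For each $\delta>0$, each $x\in\mbbr$ and each $0\le k<d$, the substitution $y:=\tfrac{x+k}{d}$ gives, directly from \eqref{eq:xi defn},
\[
\xi_\delta^x(f_k)=\max\Bigl\{2f\!\left(\tfrac{x+k}{d}\right)-f\!\left(\tfrac{x+\delta+k}{d}\right)-f\!\left(\tfrac{x-\delta+k}{d}\right),\,0\Bigr\}=\xi_{d^{-1}\delta}^{\,y}(f)\le \xi_{d^{-1}\delta}^*(f),
\]
since $x\mapsto\tfrac{x+k}{d}$ is a bijection of $\mbbr$, so $y$ sweeps out all of $\mbbr$. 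Because $\xi_\delta^x\in\Gamma$ by the last assertion of Lemma~\ref{lem:cone}, the ``$\max$'' property in the definition of $\Gamma$, applied inductively to $f_0,\dots,f_{d-1}$, yields
\[
\xi_\delta^x(\mscm_d f)=\xi_\delta^x\bigl(\max_{0\le k<d}f_k\bigr)\le \max_{0\le k<d}\xi_\delta^x(f_k)\le \xi_{d^{-1}\delta}^*(f).
\]
Taking the supremum over $x\in\mbbr$ gives \eqref{eq:max xi}. For \eqref{eq:max eta} I would then just combine \eqref{eq:max xi} with \eqref{eq:eta xi}, substituting $\delta=d\delta'$:
\[
\eta(\mscm_d f)=\sup_{\delta>0}\delta^{-2}\xi_\delta^*(\mscm_d f)\le \sup_{\delta>0}\delta^{-2}\xi_{d^{-1}\delta}^*(f)=\sup_{\delta'>0}(d\delta')^{-2}\xi_{\delta'}^*(f)=d^{-2}\eta(f).
\]

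I do not expect a genuine obstacle; the only points demanding care are the bookkeeping in the affine change of variables, in particular the remark that the base point $y=\tfrac{x+k}{d}$ ranges over all of $\mbbr$ as $x$ does (so nothing is lost in passing from $\xi_{d^{-1}\delta}^{\,y}$ to $\xi_{d^{-1}\delta}^*$), and the observation that $\mscm_d f$ really is a \emph{finite} maximum so that the ``$\max$'' axiom of $\Gamma$ applies verbatim. As an alternative to the last display, one may note that the same rescaling identity shows $\eta(f_k)=d^{-2}\eta(f)$ for every $k$ and then invoke the ``$\max$'' property of $\eta\in\Gamma$ directly; routing everything through $\xi$ as above, however, keeps \eqref{eq:max xi} and \eqref{eq:max eta} on the same footing.
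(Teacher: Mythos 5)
Your proposal is correct and follows essentially the same route as the paper: decompose $\mscm_d f=\max_{0\le k<d}f_k$ with $f_k(x)=f(\tfrac{x+k}{d})$, use the rescaling identity to get $\xi$ of each $f_k$ in terms of $\xi_{d^{-1}\delta}$ of $f$, apply the ``max'' property from Lemma~\ref{lem:cone}, and deduce \eqref{eq:max eta} from \eqref{eq:max xi} via \eqref{eq:eta xi}. The only cosmetic difference is that you apply the max property pointwise at the level of $\xi_\delta^x$ before taking the supremum, whereas the paper works directly with $\xi_\delta^*$; the content is identical.
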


\begin{proof}
Identify $f$ as a function on $\mbbr$ of period $1$. For $0\le k<d$, let $f_k\in C(\mbbr)$ be defined by $f_k(x):=f(\frac{x+k}{d})$. By definition, $\mscm_d f= \max\limits_{0\le k<d}f_k$, and
  \[
  \xi_\delta^*(f_k) = \xi_{d^{-1}\delta}^*(f), \quad \forall\,0\le k<d,\,\delta>0.
  \]
It follows that
  \[
  \xi_\delta^*(\mscm_d f)=\xi_\delta^*(\max\limits_{0\le k<d}f_k)\le \max_{0\le k<d}\xi_\delta^*(f_k)=\xi_{d^{-1}\delta}^*(f),
  \]
where the ``$\le$" is due to Lemma~\ref{lem:cone}. This completes the proof of \eqref{eq:max xi}. \eqref{eq:max eta} follows from \eqref{eq:max xi} and \eqref{eq:eta xi} immediately.
\end{proof}

Now we are ready to prove Theorem~\ref{thm:calib}.
  \begin{proof}[Proof of Theorem~\ref{thm:calib}]
    Since $g+\beta(f)=\mscm_d(f+g)$, \eqref{eq:xi cali} follows from \eqref{eq:max xi} in Proposition~\ref{prop:max op} and Lemma~\ref{lem:cone} directly. To prove \eqref{eq:eta cali}, first note that by \eqref{eq:xi cali} and \eqref{eq:eta xi}, the following holds for any $\delta>0$:
    \[
   \xi_\delta^*(g)\le  \xi_{d^{-1}\delta}^*(f)+\xi_{d^{-1}\delta}^*(g)\le (d^{-1}\delta)^2\cdot\eta(f)+ \xi_{d^{-1}\delta}^*(g).
    \]
    Iterating this inequality repeatedly and noting that $\xi_{d^{-n}\delta}^*(g)\to 0$ as $n\to\infty$, we obtain that
    \[
    \xi_\delta^*(g) \le \sum_{n=1}^\infty (d^{-n}\delta)^2\cdot \eta(f) =\delta^2\cdot(d^2-1)^{-1}\cdot\eta(f).
    \]
   Since $\delta>0$ is arbitrary, \eqref{eq:eta cali} follows.
  \end{proof}

  \section{Sturmian optimization}\label{se:sturm}

In this section we fix $d=2$ and denote $Tx=2x$, $x\in\mbbt$.  For this system, we shall show that certain functions are uniquely maximized by Sturmian measures, which generalizes  the result of Bousch \cite[TH\'{E}OR\`{E}ME A]{Bou00}. The results are stated in  Theorem~\ref{thm:sturm} and its consequences Corollary~\ref{cor:sturm antisym} and Theorem~\ref{thm:cosine-like}. Recall that for any closed semicircle $S\subset \mbbt$ (i.e. $S=[\gamma,\gamma+\frac{1}{2}]$ for some $\gamma\in\mbbr$), there exists a unique $T$-invariant Borel probability measure supported on $S$, called the {\bf Sturmian measure} on $S$. Bullett and Sentenac \cite{BulSen94} studied Sturmian measures systematacially. See also \cite[\S~6]{Fog02} for connection of Sturmian measures with symbolic dynamics.

\subsection{Bousch's criterion of Sturmian condition}\label{sse:Bousch}

This subsection is devoted to reviewing the basic ideas of Bousch for his proof of \cite[TH\'{E}OR\`{E}ME A]{Bou00}.

\begin{defn}[{Bousch~\cite[Page~497]{Bou00}}]
  Let $f\in Lip(\mbbt)$ and let  $g\in Lip(\mbbt)$ be a calibrated sub-action of $f$ for the $x\mapsto 2x$ system. Denote
\begin{equation}\label{eq:funcR}
   R(x)=R_{f,g}(x):=f(x)+g(x)-f(x+\tfrac{1}{2})-g(x+\tfrac{1}{2}), \quad x\in\mbbt.
\end{equation}
If $\{x\in\mbbt :R(x)=0\}$ consists of a single pair of antipodal points, then we say that $(f,g)$ satisfies {\bf Sturmian condition}. We also say that $f$ satisfies Sturmian condition if $(f,g)$ satisfies Sturmian condition for some $g$.
\end{defn}

By definition, it is easy to see the following holds, which is a combination of the Proposition in page 497 of \cite{Bou00} and \cite[LEMME~C]{Bou00}.
\begin{lem}[{Bousch~\cite{Bou00}}]\label{lem:Sturmian cond}
  If $f\in Lip(\mbbt)$ satisfies Sturmian condition, then $f$ admits a unique maximizing measure. Moreover, this measure is a Sturmian measure, and up to an additive constant, there exists a unique calibrated  sub-action of $f$.
\end{lem}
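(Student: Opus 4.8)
The plan is to transfer all the information carried by $R = R_{f,g}$ to the normalized potential $\tf := f + g - g\circ T$ and then to read off the structure of $\mclm_{max}(f)$ from the sign of $R$; the argument is essentially Bousch's, combining the proposition on p.~497 of \cite{Bou00} with \cite[LEMME~C]{Bou00}, recast in the present notation.

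Fix a Lipschitz calibrated sub-action $g$ of $f$ witnessing the Sturmian condition. First I would expand $\tf$: since $2(x+\tfrac12)\equiv 2x$ on $\mbbt$, the definition \eqref{eq:funcR} of $R$ immediately becomes
\[
R(x)=\tf(x)-\tf(x+\tfrac12),\qquad R(x+\tfrac12)=-R(x),\qquad x\in\mbbt,
\]
so in particular $\{R=0\}$ is invariant under the half-rotation. Next, rewriting the calibration equation \eqref{eq:calib defn} for $Tx=2x$ in terms of $\tf$ and cancelling the common term $g$ shows that $\max\{\tf(y),\tf(y+\tfrac12)\}=\beta(f)$ for every antipodal pair $\{y,y+\tfrac12\}$. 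Combined with Ma\~{n}\'{e}'s inequality $\tf\le\beta(f)$, this yields the identification $\{\tf=\beta(f)\}=\{x\in\mbbt:R(x)\ge 0\}$: if $R(x)\ge 0$ then $\tf(x)\ge\tf(x+\tfrac12)$, which forces $\tf(x)=\beta(f)$; conversely $\tf(x)=\beta(f)\ge\tf(x+\tfrac12)$ gives $R(x)\ge 0$.

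Now I would invoke the hypothesis. Writing $\{R=0\}=\{p,p+\tfrac12\}$, the complement $\mbbt\setminus\{p,p+\tfrac12\}$ is a union of two open arcs interchanged by the half-rotation; on each of them $R$ has constant nonzero sign, and these two signs are opposite because $R(x+\tfrac12)=-R(x)$. Choosing the orientation so that $R>0$ on the interior of $S:=[p,p+\tfrac12]$, we conclude $\{\tf=\beta(f)\}=\{R\ge 0\}=S$, a closed semicircle. Every $\mu\in\mclm_{max}(f)$ satisfies $\tf=\beta(f)$ on $\mathrm{supp}\,\mu$, hence $\mathrm{supp}\,\mu\subset S$; since $\mu\mapsto\int_{\mbbt}f\,\dif\mu$ attains its maximum on the compact set $\mclm(\mbbt,T)$ we have $\mclm_{max}(f)\neq\varnothing$, and by Bullett and Sentenac's analysis \cite{BulSen94} the closed semicircle $S$ carries a unique $T$-invariant probability measure, namely the Sturmian measure on $S$. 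Therefore $\mclm_{max}(f)$ is the singleton consisting of that Sturmian measure, which gives the first two assertions, and the remaining claim — uniqueness of the calibrated sub-action up to an additive constant — follows at once from the ``moreover'' part of Lemma~\ref{lem:Lip cal}.

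I do not anticipate a genuine obstacle here: the one deep ingredient, uniqueness of the invariant measure supported on a closed semicircle, is quoted (from \cite{BulSen94}, equivalently \cite[LEMME~C]{Bou00}), and everything else is bookkeeping around $\tf$ and $R$. The only point that needs a little care is the sign analysis of $R$ on $\mbbt\setminus\{p,p+\tfrac12\}$: one must check that the two complementary arcs carry opposite signs, so that $\{R\ge 0\}$ is \emph{exactly} a semicircle rather than a strictly smaller or larger set — and this is precisely where the antipodal relation $R(\cdot+\tfrac12)=-R(\cdot)$, and hence the assumption that $\{R=0\}$ is a single antipodal pair, is used.
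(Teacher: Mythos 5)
Your proposal is correct, and it is essentially the argument the paper points to: the paper itself offers no proof of Lemma~\ref{lem:Sturmian cond}, citing instead the Proposition on p.~497 of Bousch and his LEMME~C, and your reconstruction (identify $\{\tf=\beta(f)\}=\{R\ge 0\}$ via the calibration equation at antipodal pairs, conclude it is a closed semicircle, invoke uniqueness of the invariant measure on a semicircle from Bullett--Sentenac, then get uniqueness of the calibrated sub-action from the ``moreover'' part of Lemma~\ref{lem:Lip cal}) is exactly that route. No gaps; the sign analysis of $R$ on the two complementary arcs and the use of $R(\cdot+\tfrac12)=-R(\cdot)$ are handled correctly.
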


\begin{rmk}
As a corollary, if $f\in Lip(\mbbt)$ satisfies Sturmian condition, then for the equilibrium state of $tf$ ($t>0$), its zero-temperature limit (as $t\to+\infty$) exists. See, for example, \cite[Theorem~4.1]{Jen19} for details.
\end{rmk}

The basic idea of Bousch \cite{Bou00} to verify Sturmian condition is summarized below.
\begin{lem}\label{lem:Bousch test}
Let $f\in Lip(\mbbt)$ and let $g\in Lip(\mbbt)$ be a calibrated sub-action of $f$. Assume that for $R$ defined by \eqref{eq:funcR}, there exist $a\in\mbbr$ and $b\in(a,a+\frac{1}{2})$ with the following properties:
  \begin{itemize}
    \item [(S1)]  $R>0$ on $[a,b]$,
    \item [(S2)]  $R$ is strictly decreasing on $[b,a+\frac{1}{2}]$.
  \end{itemize}
Then  $(f,g)$ satisfies Sturmian condition.
\end{lem}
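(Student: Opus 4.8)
The plan is to show that the hypotheses (S1) and (S2) force the zero set $\{R=0\}$ to be exactly one antipodal pair, which is the definition of Sturmian condition. First I would record the built-in antisymmetry of $R$: directly from \eqref{eq:funcR} one has $R(x+\tfrac12)=-R(x)$ for all $x\in\mbbt$. Consequently $R$ vanishes at $x$ if and only if it vanishes at $x+\tfrac12$, so the zero set is automatically a union of antipodal pairs, and it suffices to prove that $R$ has exactly one zero in the half-open arc $[a,a+\tfrac12)$ of length $\tfrac12$ (a fundamental domain for the antipodal involution). The continuous function $R$ restricted to $[a,a+\tfrac12]$ satisfies $R(a+\tfrac12)=-R(a)$, so the two endpoint values are negatives of one another.

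Next I would locate that unique zero using (S1) and (S2). By (S1), $R>0$ on $[a,b]$; in particular $R(a)>0$, hence $R(a+\tfrac12)=-R(a)<0$. On $[b,a+\tfrac12]$, (S2) says $R$ is strictly decreasing, and by continuity and (S1) we still have $R(b)\ge 0$, in fact $R(b)>0$ (it is the limit from the left of positive values, or one invokes (S1) on the closed arc $[a,b]$ depending on how (S1) is read — either way $R(b)>0$ since if $R(b)=0$ then strict decrease gives $R<0$ on $(b,a+\tfrac12]$ and the sign pattern still works, but the cleaner route is to note $R(b)>0$ from (S1)). Since $R$ is strictly decreasing on $[b,a+\tfrac12]$ with $R(b)>0$ and $R(a+\tfrac12)<0$, by the intermediate value theorem there is exactly one point $c\in(b,a+\tfrac12)$ with $R(c)=0$, and strict monotonicity guarantees uniqueness of this zero on $[b,a+\tfrac12]$. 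Combined with $R>0$ on $[a,b]$, we conclude $R$ has the single zero $c$ in the arc $[a,a+\tfrac12)$.

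Finally I would assemble the conclusion: by the antisymmetry $R(x+\tfrac12)=-R(x)$, the full zero set of $R$ on $\mbbt$ is $\{c,\,c+\tfrac12\}$, a single pair of antipodal points. Hence $(f,g)$ satisfies Sturmian condition by definition. I do not anticipate a genuine obstacle here; the only mildly delicate point is the bookkeeping at the transition point $b$ — making sure that the value $R(b)$ is strictly positive so that the sign change happens strictly inside $(b,a+\tfrac12)$ and not at $b$ itself, and confirming that no zero is hidden on $[a,b]$ — but this is immediate from (S1). The argument is purely one-dimensional real analysis (intermediate value theorem plus strict monotonicity), and the dynamical content is entirely encapsulated in the antisymmetry identity for $R$, which is elementary.
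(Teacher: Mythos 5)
Your argument is correct and follows essentially the same route as the paper's proof: use the antisymmetry $R(x+\tfrac12)=-R(x)$ to reduce to counting zeros on a half-circle, then use (S1) for positivity (hence no zeros) on $[a,b]$ and (S2) with the sign change $R(b)>0>R(a+\tfrac12)$ to get exactly one zero on $[b,a+\tfrac12]$. The paper's version is just terser, leaving the intermediate-value and monotonicity bookkeeping implicit.
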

Let us provide a proof for completeness.
\begin{proof}
  By definition, $R(x+\frac{1}{2})=-R(x)$, so that $\{x\in\mbbt: R(x)=0\}$ is non-empty and consists of pairs of antipodal points. On the other hand, the assumptions (S1) and (S2) imply that $\{x\in\mbbt: R(x)=0\}\cap [a,a+\frac{1}{2}]$ contains at most one point. The conclusion follows.
\end{proof}

To get an effective test for (S1) in Lemma~\ref{lem:Bousch test}, Bousch made the following observation in his proof of  \cite[LEMME TECHNIQUE 1]{Bou00}. Recall the notations introduced in \eqref{eq:xi defn} and  \eqref{eq:xi* defn}.
\begin{lem}
Let $f\in  Lip(\mbbt)$ and let $g\in Lip(\mbbt)$ be a calibrated sub-action of $f$. Then for any $n\ge 2$ and any $x\in\mbbt$, we have:
\begin{equation}\label{eq:R>0 reduced}
  -2\big(g(x)-g(x+\tfrac{1}{2})\big) \le \max_{y\in T^{-(n-1)}(x+\frac{1}{2})} \sum_{k=2}^{n} \xi_{2^{-k}}^{T^{n-k}y}(f) + \xi_{2^{-n}}^*(g) .
\end{equation}
%In particular, if $f(x+\tfrac{1}{2})=-f(x)$ and if
%\begin{equation}\label{eq:R>0}
%  2f(x)+ \min_{y\in T^{-1}(x+\frac{1}{2})} f(y) +\frac{1}{2}\max \zeta_2(g)>0,
%\end{equation}
%then $R(x)>0$.
\end{lem}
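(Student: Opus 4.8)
The plan is to unfold the calibrated sub-action equation $n-1$ times along the preimages of $x+\tfrac12$ under $T$, tracking how the quantity $g(x)-g(x+\tfrac12)$ is controlled by differences of the form $2g(\cdot)-g(\cdot+\delta)-g(\cdot-\delta)$ at progressively finer scales. Recall that $g+\beta(f)=\mscm_2(f+g)$, which reads $g(z)+\beta(f)=\max\{(f+g)(\tfrac{z}{2}),(f+g)(\tfrac{z+1}{2})\}$. The two preimages of $z$ under $T$ are $\tfrac z2$ and $\tfrac z2+\tfrac12$, and these are antipodal points at distance $\tfrac12$ apart; more generally the $2^{n-1}$ preimages of $z$ under $T^{n-1}$ come in antipodal pairs, and a pair $\{w,w+\tfrac12\}$ has its two "children" under $T^{-1}$ forming two antipodal pairs at distance $2^{-1}\cdot\tfrac12$, and so on. This dyadic-tree structure is exactly what produces the scales $2^{-k}$ in the bound.

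Concretely, I would first write, for any $z\in\mbbt$ and any preimage $w$ of $z$ under $T$ (so $w\in\{\tfrac z2,\tfrac z2+\tfrac12\}$), the inequality $g(z)+\beta(f)\ge (f+g)(w)$, and also the equality achieved at the maximizing branch. The trick (this is Bousch's) is to estimate $g(x)-g(x+\tfrac12)$ by choosing, at each level of the unfolding, the \emph{maximizing} branch for the term we want to bound above and an \emph{arbitrary but cleverly matched antipodal} branch for the term we want to bound below, so that after subtraction the $\beta(f)$ and $f$ contributions telescope into second differences $2f(\cdot)-f(\cdot+2^{-k})-f(\cdot-2^{-k})\le \xi^{\cdot}_{2^{-k}}(f)$ and the residual $g$-contribution at the finest level $n$ becomes a single second difference of $g$ at scale $2^{-n}$, bounded by $\xi^*_{2^{-n}}(g)$. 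Running the induction on $n$: at step $k$ (going from level $k-1$ to $k$) one pairs up the current approximants, which differ by a half-period in the level-$k$ coordinate, i.e. by $2^{-k}$ after one more application of $T^{-1}$; this introduces the term $\xi^{T^{n-k}y}_{2^{-k}}(f)$ for an appropriate preimage point $y\in T^{-(n-1)}(x+\tfrac12)$, and the factor $2$ on the left-hand side comes from handling $g(x)$ and $g(x+\tfrac12)$ symmetrically (each contributes, and the antipodal symmetry of the preimage tree lets one reuse the same sum).

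The main obstacle I anticipate is bookkeeping the combinatorics of which branch is chosen at each level and verifying that the antipodal pairing is consistent all the way down, so that the point $y\in T^{-(n-1)}(x+\tfrac12)$ achieving the maximum in \eqref{eq:R>0 reduced} is well-defined and the intermediate points are exactly $T^{n-k}y$ as claimed. Once the dyadic structure is set up correctly this is a routine telescoping, but getting the indices to line up (the sum starting at $k=2$, the scale $2^{-k}$ at step $k$, and the leftover $g$-term at scale $2^{-n}$) requires care. I would organize the proof as an explicit induction on $n\ge 2$: the base case $n=2$ is a direct two-fold application of the calibrated equation together with the antipodal identity $R(x+\tfrac12)=-R(x)$; the inductive step replaces one level-$k$ term $g(\cdot)-g(\cdot+\tfrac12)$ by a $\xi_{2^{-k}}(f)$-term plus a finer term $g(\cdot)-g(\cdot+\tfrac12)$ at level $k+1$, using $g(z)\le (f+g)(w)-\beta(f)$ on the branch to be bounded above and equality on the other. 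Since Lemma~\ref{lem:Lip cal} guarantees $g\in Lip(\mbbt)$, the term $\xi^*_{2^{-n}}(g)\le lip(g)^2\cdot 2^{-2n}\to 0$, which is what makes this unfolding useful in the subsequent application (letting $n\to\infty$ and combining with Theorem~\ref{thm:calib}).
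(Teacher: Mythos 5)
Your plan follows the same route as the paper (Bousch's unfolding along a maximizing backward orbit of $x+\tfrac12$, with second differences of $f$ telescoping at scales $2^{-k}$ and a residual second difference of $g$ at scale $2^{-n}$), and your earlier description of the mechanism — equality on the maximizing branch for the term to be bounded above, the general calibrated inequality on matched branches for the terms to be bounded below — is exactly right. However, two points in your summary are misstated, and as literally written your induction would not close.

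First, the induction invariant: you say the step replaces a level-$k$ term of the form $g(\cdot)-g(\cdot+\tfrac12)$ by a $\xi_{2^{-k}}(f)$-term plus a finer term of the same antipodal form. In fact the antipodal difference appears only at the top level: with $x_1=x+\tfrac12$, periodicity gives $2g(x_1)-g(x_1+\tfrac12)-g(x_1-\tfrac12)=-2\big(g(x)-g(x+\tfrac12)\big)$, and from then on the quantity being propagated is the three-point second difference $\xi_{2^{-(k-1)}}^{x_{k-1}}(g)$, never again an antipodal difference (pulling back an antipodal difference produces a second difference at scale $\tfrac14$, not another antipodal difference). The paper's proof takes $x_k\in T^{-1}x_{k-1}$ with $g(x_{k-1})=f(x_k)+g(x_k)$ and uses $g(x_{k-1}\pm 2^{-(k-1)})\ge f(x_k\pm 2^{-k})+g(x_k\pm 2^{-k})$ for the matched preimages $x_k\pm 2^{-k}$ (which for $k\ge 3$ are preimages of two different points and are not antipodal to anything relevant), yielding $\xi_{2^{-(k-1)}}^{x_{k-1}}(g)\le \xi_{2^{-k}}^{x_k}(f)+\xi_{2^{-k}}^{x_k}(g)$, and then telescopes with $y=x_n$. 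Second, your final sentence assigns equality to the wrong branches: equality in the calibrated relation holds only on the maximizing branch (used for the positively signed term $2g(x_{k-1})$), while on the matched comparison branches one has only $g(z)\ge (f+g)(w)-\beta(f)$, which is precisely the direction needed for the negatively signed terms; ``$g(z)\le (f+g)(w)-\beta(f)$ on one branch and equality on the other'' is false as stated. (A minor side remark, outside the lemma itself: for merely Lipschitz $g$ one gets $\xi^*_{2^{-n}}(g)\le 2\,lip(g)\cdot 2^{-n}$, not $lip(g)^2\cdot 2^{-2n}$; quadratic decay in the scale requires $\eta(g)<+\infty$, i.e.\ Theorem~\ref{thm:calib}.)
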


Let us reproduces Bousch's proof for completeness.
\begin{proof}
We follow the beginning part in the proof of \cite[LEMME TECHNIQUE 1]{Bou00}. Without loss of generality, assume that $\beta(f)=0$ for simplicity, so that \eqref{eq:calib defn} becomes
\[
g(x)=\max_{Ty=x}(f(y)+g(y)),\quad \forall\, x\in\mbbt.
\]
To prove \eqref{eq:R>0 reduced}, fix $x\in\mbbt$, denote $x_1=x+\tfrac{1}{2}$ and for each $n\ge 2$, choose $x_n\in T^{-1}x_{n-1}$ such that
\[
g(x_{n-1})=f(x_n)+g(x_n)
\]
inductively on $n$. Noting that $T(x_{n}\pm 2^{-n})=x_{n-1}\pm 2^{-n+1}$,  we have:
  \[
g(x_{n-1}+2^{-n+1}) \ge f(x_n+2^{-n})+ g(x_n+2^{-n}),
 \]
 \[
 g(x_{n-1}-2^{-n+1})\ge f(x_n-2^{-n})+ g(x_n-2^{-n}).
 \]
 It follows that for each $n\ge 2$,
 \[
 \xi_{2^{-(n-1)}}^{x_{n-1}}(g) \le \xi_{2^{-n}}^{x_{n}}(f) +\xi_{2^{-n}}^{x_{n}}(g),
 \]
and therefore
\[
-2(g(x)-g(x+\tfrac{1}{2})) = \xi_{2^{-1}}^{x_{1}}(g) \le \sum_{k=2}^n \xi_{2^{-k}}^{x_{k}}(f)+\xi_{2^{-n}}^{x_{n}}(g).
\]
The proof of \eqref{eq:R>0 reduced} is completed.
\end{proof}

\subsection{Sufficient conditions for Sturmian condition}\label{sse:suff cond}

In this subsection, based on Lemma~\ref{lem:Bousch test} and Theorem~\ref{thm:calib}, we shall prove Theorem~\ref{thm:sturm} and Corollary~\ref{cor:sturm antisym}, which provide sufficient conditions for Sturmian condition that are easy to check for specific observable $f$. Let us begin with a simple fact that will be used to verify (S2) in Lemma~\ref{lem:Bousch test}.

\begin{lem}\label{lem:deri des}
 The following holds for any $f\in Lip(\mbbr)$ with $\eta(f)<+\infty$ and any $\delta>0$:
  \[
  f'(x)-f'(x+\delta)\le \eta(f)\cdot\delta, \quad \text{a.e.}~x\in\mbbr.
  \]
\end{lem}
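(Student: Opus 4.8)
The plan is to reduce everything to the elementary fact that a convex function on $\mbbr$ has a non-decreasing derivative on the (full-measure) set where it is differentiable. Set $a:=\eta(f)$, which is finite by hypothesis, and consider $\phi(x):=f(x)+\frac{a}{2}x^2$. By Lemma~\ref{lem:eta convex}, $\phi$ is convex on $\mbbr$. Since $x\mapsto\frac{a}{2}x^2$ is smooth, $f$ is differentiable at a point if and only if $\phi$ is, and where differentiable we have $f'(x)=\phi'(x)-ax$.

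Next I would record the two standard facts about a convex function $\phi:\mbbr\to\mbbr$ that are needed: first, $\phi$ is differentiable everywhere except on a countable (hence Lebesgue-null) set $D$; second, on $\mbbr\setminus D$ the derivative $\phi'$ is non-decreasing. Both follow from the existence and monotonicity of the one-sided derivatives together with the sandwich $\phi'_-(x)\le\phi'_+(x)\le\phi'_-(y)$ valid for $x<y$.

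Finally, fix $\delta>0$ and put $N:=D\cup(D-\delta)$, which is Lebesgue-null since it is the union of $D$ with a translate of $D$. For every $x\notin N$, both $f'(x)$ and $f'(x+\delta)$ exist, and monotonicity of $\phi'$ gives $\phi'(x)\le\phi'(x+\delta)$, that is $f'(x)+ax\le f'(x+\delta)+a(x+\delta)$; rearranging yields $f'(x)-f'(x+\delta)\le a\delta=\eta(f)\cdot\delta$, which is precisely the asserted inequality for a.e.\ $x$.

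I do not expect any genuine obstacle here; the statement is a routine consequence of convexity once Lemma~\ref{lem:eta convex} is invoked. The only point that requires a little care is bookkeeping the ``a.e.'' qualifier: the exceptional sets for $f'(\cdot)$ and for $f'(\cdot+\delta)$ are a countable set and a translate of it, hence both null, so their union is still null.
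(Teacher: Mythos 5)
Your proposal is correct and follows essentially the same route as the paper: one passes to the convex function $f(x)+\tfrac{\eta(f)}{2}x^2$ via Lemma~\ref{lem:eta convex} and uses monotonicity of its derivative almost everywhere. Your additional bookkeeping of the null exceptional set is just a more explicit rendering of the same argument.
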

\begin{proof}
Let $\tilde{f}(x):=f(x)+\frac{\eta(f)}{2}x^2$. Then by Lemma~\ref{lem:eta convex}, $\tilde{f}$ is convex on $\mbbr$, so that for any $\delta>0$,
  \[
 f'(x)-f'(x+\delta)-\eta(f)\cdot\delta =\tilde{f}'(x)-\tilde{f}'(x+\delta)\le 0, \quad \text{a.e.}~x\in\mbbr.
  \]
\end{proof}

Our main technical result in this section is the following. Its statement might look complicated, so we shall present a simplified version in Corollary~\ref{cor:sturm antisym} and further deduce another result in Theorem~\ref{thm:cosine-like}.
\begin{thm}\label{thm:sturm}
  Let $f\in Lip(\mbbt)$ be with $\eta(f)<+\infty$. Suppose that there exist $a\in\mbbr$ and $b\in (a,a+\frac{1}{2})$ such that \eqref{eq:imply R>0} and \eqref{eq:imply R'<0} below hold:
  \begin{equation}\label{eq:imply R>0}
    f(x)-f(x+\tfrac{1}{2})-\tfrac{1}{2}\cdot\max_{y\in T^{-1}(x+\frac{1}{2})}\xi_{\frac{1}{4}}^y(f)>\tfrac{1}{96}\eta(f),\quad \forall\, x\in [a,b],
  \end{equation}
  \begin{equation}\label{eq:imply R'<0}
    f'(x)-f'(x+\tfrac{1}{2})<-\tfrac{1}{6}\eta(f), \quad\text{a.e.}~x\in [b,a+\tfrac{1}{2}].
  \end{equation}
Then $f$ satisfies Sturmian condition.
\end{thm}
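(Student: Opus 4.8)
The plan is to invoke Lemma~\ref{lem:Bousch test}, so I need to produce a calibrated sub-action $g\in Lip(\mbbt)$ (which exists by Lemma~\ref{lem:Lip cal}) and verify conditions (S1) and (S2) for the function $R=R_{f,g}$ on the interval $[a,a+\tfrac12]$. Since $\eta(f)<+\infty$, Theorem~\ref{thm:calib} gives $\eta(g)\le (d^2-1)^{-1}\eta(f)=\tfrac13\eta(f)$ for $d=2$, and more precisely the pointwise bound \eqref{eq:xi cali}; this is the device that replaces Bousch's numerical estimates. First I would handle (S2), which is the easier half: write $R(x)=\bigl(f(x)+g(x)\bigr)-\bigl(f(x+\tfrac12)+g(x+\tfrac12)\bigr)$ and differentiate a.e., so $R'(x)=\bigl(f'(x)-f'(x+\tfrac12)\bigr)+\bigl(g'(x)-g'(x+\tfrac12)\bigr)$. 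By Lemma~\ref{lem:deri des} applied to $g$ with $\delta=\tfrac12$, $g'(x)-g'(x+\tfrac12)\le \tfrac12\eta(g)\le \tfrac16\eta(f)$ a.e.; combined with hypothesis \eqref{eq:imply R'<0} this yields $R'<0$ a.e.\ on $[b,a+\tfrac12]$. Since $R$ is Lipschitz (hence absolutely continuous), a.e.-negative derivative forces $R$ to be strictly decreasing there, giving (S2).

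For (S1) I would use the inequality \eqref{eq:R>0 reduced}. Unwinding the definitions, $R(x)>0$ is equivalent to $\bigl(f(x)-f(x+\tfrac12)\bigr) > -\bigl(g(x)-g(x+\tfrac12)\bigr)$, i.e.\ to $2\bigl(f(x)-f(x+\tfrac12)\bigr) > -2\bigl(g(x)-g(x+\tfrac12)\bigr)$. Bound the right-hand side using \eqref{eq:R>0 reduced}: for every $n\ge 2$,
\[
-2\bigl(g(x)-g(x+\tfrac12)\bigr)\le \max_{y\in T^{-(n-1)}(x+\frac12)}\sum_{k=2}^{n}\xi_{2^{-k}}^{T^{n-k}y}(f)+\xi_{2^{-n}}^*(g).
\]
Now estimate the sum. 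The $k=2$ term is $\xi_{1/4}^{T^{n-2}y}(f)$ where $T^{n-2}y\in T^{-1}(x+\tfrac12)$, so its maximum over $y$ is at most $\max_{z\in T^{-1}(x+\frac12)}\xi_{1/4}^{z}(f)$. For $k\ge 3$ use the crude bound $\xi_{2^{-k}}^{\cdot}(f)\le \xi_{2^{-k}}^*(f)\le (2^{-k})^2\eta(f)=4^{-k}\eta(f)$ via \eqref{eq:eta xi}, and $\xi_{2^{-n}}^*(g)\le 4^{-n}\eta(g)\le \tfrac13\cdot 4^{-n}\eta(f)\to 0$. Summing $\sum_{k\ge 3}4^{-k}=\tfrac{1}{48}$, letting $n\to\infty$, gives
\[
-2\bigl(g(x)-g(x+\tfrac12)\bigr)\le \max_{z\in T^{-1}(x+\frac12)}\xi_{1/4}^{z}(f)+\tfrac{1}{48}\eta(f).
\]
Therefore $R(x)>0$ whenever $2\bigl(f(x)-f(x+\tfrac12)\bigr)>\max_{z}\xi_{1/4}^{z}(f)+\tfrac{1}{48}\eta(f)$, which after dividing by $2$ is exactly hypothesis \eqref{eq:imply R>0}. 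This establishes (S1) on $[a,b]$. With (S1) and (S2) in hand, Lemma~\ref{lem:Bousch test} shows $(f,g)$ satisfies Sturmian condition, hence $f$ does.

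\textbf{Main obstacle.} The delicate point is the bookkeeping in the (S1) estimate: one must be careful that in \eqref{eq:R>0 reduced} the index $T^{n-k}y$ for $k=2$ ranges over $T^{-1}(x+\tfrac12)$ (not over a larger preimage set), so that the $\tfrac14$-term is controlled by the quantity appearing in \eqref{eq:imply R>0} rather than by a supremum over all of $\mbbt$; the remaining terms $k\ge 3$ and the tail $\xi_{2^{-n}}^*(g)$ are then genuinely negligible and only contribute the constant $\tfrac{1}{48}$. Getting the constant $\tfrac{1}{96}$ in \eqref{eq:imply R>0} right (the factor $\tfrac12$ from dividing by $2$, applied to $\tfrac{1}{48}$) is where one has to be precise, but no numerical computation beyond summing a geometric series is involved — this is the promised simplification over Bousch's original \cite[LEMME TECHNIQUE 2]{Bou00}.
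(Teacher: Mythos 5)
Your proposal is correct and follows essentially the same route as the paper: take a Lipschitz calibrated sub-action $g$ (Lemma~\ref{lem:Lip cal}), use $\eta(g)\le\tfrac13\eta(f)$ from Theorem~\ref{thm:calib}, verify (S1) via \eqref{eq:R>0 reduced} and (S2) via Lemma~\ref{lem:deri des} with $\delta=\tfrac12$, and conclude with Lemma~\ref{lem:Bousch test}. The only (harmless) difference is in the (S1) bookkeeping: the paper takes $n=2$ in \eqref{eq:R>0 reduced} and bounds $\xi_{1/4}^*(g)\le\tfrac{1}{16}\eta(g)\le\tfrac{1}{48}\eta(f)$, whereas you let $n\to\infty$ and sum the $k\ge 3$ terms of $f$ into the same constant $\tfrac{1}{48}\eta(f)$ --- two equivalent ways of summing the same geometric series.
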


\begin{proof}
Let $g\in Lip(\mbbt)$ be a calibrated sub-action of $f$. Note that by \eqref{eq:eta cali},  $\eta(g)\le \frac{1}{3}\eta(f)$. It suffices to verify the two assumptions (S1) and (S2) in Lemma~\ref{lem:Bousch test}.

{\bf Verifying (S1).}  Firstly, taking $n=2$ in \eqref{eq:R>0 reduced} yields that
\[
g(x)-g(x+\tfrac{1}{2})\ge  - \tfrac{1}{2}\cdot\max_{y\in T^{-1}(x+\frac{1}{2})}\xi_{\frac{1}{4}}^y(f) - \tfrac{1}{2}\cdot \xi_{\frac{1}{4}}^*(g).
\]
Secondly, due to \eqref{eq:eta xi} and $\eta(g)\le \frac{1}{3}\eta(f)$,
\[
\xi_{\frac{1}{4}}^*(g)\le \tfrac{1}{16}\eta(g)\le \tfrac{1}{48}\eta(f).
\]
Combining the inequalities in the two displayed lines above with  \eqref{eq:imply R>0} and the definition of $R$, assumption (S1) is verified.

{\bf Verifying (S2).} In suffices to show that $R'<0$ a.e. on $[b,a+\frac{1}{2}]$. Identify $g$ as a function on $\mbbr$ of period $1$. Applying Lemma~\ref{lem:deri des} to $g$ with $\delta=\frac{1}{2}$, we have:
\[
g'(x)-g'(x+\tfrac{1}{2})\le \tfrac{1}{2}\eta(g), \quad \text{a.e.}~x\in\mbbr.
\]
Combing this with $\eta(g)\le \frac{1}{3}\eta(f)$, \eqref{eq:imply R'<0} and the definition of $R$, the conclusion follows.
\end{proof}

To simplify the expressions in \eqref{eq:imply R>0} and \eqref{eq:imply R'<0}, let us add certain symmetry ((A0) in Definition~\ref{defn:antisym}) to $f$ and introduce the following families of functions. Then the statement of Theorem~\ref{thm:sturm} can be reduced to Corollary~\ref{cor:sturm antisym} below.

\begin{defn}\label{defn:antisym}
Given $a\in\mbbr$, $b\in (a,a+\frac{1}{2})$ and $v\in\mbbr$, let $\mcla_{a,b}^v$ denote the collection of $f\in Lip(\mbbt)$ such that the assumptions (A0), (A1), (A2) below hold.
  \begin{itemize}
    \item [(A0)] $f(x)+f(x+\frac{1}{2})=2v$ for $x\in\mbbt$.
    \item [(A1)] $2f(x)-v -\max_{y\in\mbbt} f(y)> \frac{1}{96}\eta(f)$ for $x\in [a,b]$.
    \item [(A2)] $f'(x)<- \frac{1}{12}\eta(f)$ for a.e. $x\in [b,a+\frac{1}{2}]$.
  \end{itemize}
Moreover, denote $\mcla_{a,b}:=\cup_{v\in\mbbr}\mcla_{a,b}^v$.
\end{defn}

\begin{rmk}
 By definition, if $f\in\mcla_{a,b}^v$, then for $f_\omega(x):=f(x-\omega)$, $f_\omega \in\mcla_{a+\omega,b+\omega}^v$ for any $\omega\in\mbbr$.  On the other hand, $\mcla_{a,b}$ is a ``cone" in the sense that if $f,g\in \mcla_{a,b}$ and $t>0$, then $tf, f+g\in \mcla_{a,b}$. This can be easily checked by definition and Lemma~\ref{lem:cone}.
\end{rmk}

In the statement below, by saying that a $T$-invariant measure is a {\bf minimizing measure} of $f\in C(\mbbt)$, we mean that it is a maximizing measure of $-f$.
\begin{cor}\label{cor:sturm antisym}
  For $\mcla_{a,b}$ defined in Definition~\ref{defn:antisym}, let $f\in \mcla_{a,b}$. Then for any $\omega\in \mbbr$, $f_\omega(x):=f(x-\omega)$ satisfies Sturmian condition. As a result, $f_\omega$ admits a unique maximizing measure and a unique minimizing measure, and both of them are Sturmian measures.
\end{cor}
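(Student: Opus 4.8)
The plan is to deduce Corollary~\ref{cor:sturm antisym} from Theorem~\ref{thm:sturm} by checking that the hypotheses (A0)--(A2) force \eqref{eq:imply R>0} and \eqref{eq:imply R'<0} to hold for $f_\omega$, and then to invoke Lemma~\ref{lem:Sturmian cond} for the conclusion about maximizing and minimizing measures. First I would reduce to the case $\omega=0$: by the remark following Definition~\ref{defn:antisym}, $f\in\mcla_{a,b}^v$ implies $f_\omega\in\mcla_{a+\omega,b+\omega}^v$, and $\eta(\cdot)$ is translation-invariant, so it suffices to verify that any $f\in\mcla_{a,b}^v$ satisfies the hypotheses of Theorem~\ref{thm:sturm} with the same $a,b$; the statement for $f_\omega$ then follows with $a,b$ replaced by $a+\omega,b+\omega$.

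Next I would verify \eqref{eq:imply R>0}. Using (A0) in the form $f(x+\tfrac12)=2v-f(x)$, the left-hand side of \eqref{eq:imply R>0} becomes $2f(x)-2v-\tfrac12\max_{y\in T^{-1}(x+\frac12)}\xi_{1/4}^y(f)$. The point is to bound the $\xi$-term: by definition $\xi_{1/4}^y(f)=\max\{2f(y)-f(y+\tfrac14)-f(y-\tfrac14),0\}$, and since $f(y-\tfrac14)=2v-f(y+\tfrac14)$ by (A0) applied at $y-\tfrac14$, this simplifies to $\xi_{1/4}^y(f)=\max\{2f(y)-2v,0\}\le 2(\max_{\mbbt}f-v)$. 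Hence $\tfrac12\max_y\xi_{1/4}^y(f)\le \max_{\mbbt}f-v$, so the left-hand side of \eqref{eq:imply R>0} is at least $2f(x)-2v-(\max_{\mbbt}f-v)=2f(x)-v-\max_{\mbbt}f$, which is $>\tfrac1{96}\eta(f)$ on $[a,b]$ by (A1). This gives \eqref{eq:imply R>0}.

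For \eqref{eq:imply R'<0}, differentiating (A0) gives $f'(x+\tfrac12)=-f'(x)$ a.e., so $f'(x)-f'(x+\tfrac12)=2f'(x)$, which by (A2) is $<-\tfrac16\eta(f)$ a.e. on $[b,a+\tfrac12]$, exactly \eqref{eq:imply R'<0}. (Note $\eta(f)<+\infty$: (A2) already forces this implicitly, or one observes that $f\in Lip(\mbbt)$ with a bounded-variation-type condition; in any case $\eta(f)=+\infty$ would make (A1),(A2) vacuous or contradictory, so we may and do assume $\eta(f)<+\infty$, which is also needed to apply Theorem~\ref{thm:sturm}.) With both \eqref{eq:imply R>0} and \eqref{eq:imply R'<0} verified, Theorem~\ref{thm:sturm} gives that $f$ satisfies Sturmian condition, and translating, $f_\omega$ satisfies Sturmian condition for every $\omega$. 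Finally, Lemma~\ref{lem:Sturmian cond} yields that $f_\omega$ has a unique maximizing measure which is Sturmian; applying the same reasoning to $-f_\omega$ — which lies in $\mcla_{a+\omega+1/2,\,b+\omega+1/2}^{-v}$ since negation composed with the half-translation restores (A0)--(A2), using $f(x)=f((x+\tfrac12)+\tfrac12)$ and the semicircle structure — gives the statement about the unique minimizing measure. I expect the only mild subtlety to be the bookkeeping in this last step, namely checking that $-f_\omega$ (equivalently $-f$, suitably translated) genuinely lands back in some $\mcla_{a',b'}$; once the algebraic identity $-f(x)=f(x+\tfrac12)-2v$ is written down, (A1) and (A2) for $-f$ on the translated interval follow from those for $f$, so this is routine rather than a real obstacle.
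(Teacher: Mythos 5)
Your proposal is correct and follows essentially the same route as the paper: reduce to $\omega=0$ via the translation remark, use (A0) to simplify $\xi_{1/4}^y(f)$ to $2\max\{f(y)-v,0\}$ so that (A1) gives \eqref{eq:imply R>0}, use (A0) and (A2) to get \eqref{eq:imply R'<0}, and apply Theorem~\ref{thm:sturm} followed by Lemma~\ref{lem:Sturmian cond}. Your treatment of the minimizing measure (placing $-f_\omega$ in a translated class $\mcla_{a',b'}^{-v}$) is just a rephrasing of the paper's one-line identity $f_\omega=2v-f_{\omega+\frac{1}{2}}$, so there is no substantive difference.
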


\begin{rmk}
Results in \cite[Theorem~1]{ADJR12} and \cite[Theorem~4.1]{FSS21} are of the same style as Corollary~\ref{cor:sturm antisym} but cannot be recovered by it.
\end{rmk}

\begin{proof}
 Let $f\in \mcla_{a,b}^v$ for some $v\in\mbbr$. According to the remark ahead of the corollary, to show $f_\omega$ satisfies Sturmian condition, it suffices to prove that $f$ satisfies Sturmian condition. Firstly, since $f$ satisfies (A0), $\max_{y\in\mbbt} f(y)\ge v$ and 
  \[
  f(x)-f(x+\tfrac{1}{2})-\tfrac{1}{2}\cdot\xi_{\frac{1}{4}}^y(f)=2f(x)-v-\max\{f(y),v\},\quad \forall\,x,y\in\mbbt.
  \]
  Combining this with (A1),  \eqref{eq:imply R>0}  holds. Secondly, combining (A0) with (A2), \eqref{eq:imply R'<0} holds. Therefore, by Theorem~\ref{thm:sturm}, $f$ satisfies Sturmian condition.

  Finally, since $f_\omega$ satisfies Sturmian condition, by Lemma~\ref{lem:Sturmian cond}, it admits a unique maximizing measure which is Sturmian; since $f_{\omega}=2v-f_{\omega+\frac{1}{2}}$, the minimizing measure of $f_{\omega}$ is also unique and Sturmian.
\end{proof}

%{\bf Example.} For $f(x)=\cos 2\pi x$, $\eta(f)=4\pi^2$ and it is easy to check that $f\in\mcla_{-\frac{1}{8},\frac{1}{8}}$ (or  $f\in\mcla_{-\frac{1}{10},\frac{1}{10}}$). Then applying Corollary~\ref{cor:sturm antisym} to $f(x)=\cos 2\pi x$, the result of Bousch \cite[TH\'{E}OR\`{E}ME A]{Bou00} is recovered.

\subsection{A family of functions looking like cosine}\label{sse:cosine}
As a concrete application of Corollary~\ref{cor:sturm antisym}, we can recover the result of Bousch \cite[TH\'{E}OR\`{E}ME A]{Bou00}: for $f(x)=\cos 2\pi x$, $\eta(f)=4\pi^2$ and it is easy to check that $f\in\mcla_{-\frac{1}{8},\frac{1}{8}}$ (or  $f\in\mcla_{-\frac{1}{10},\frac{1}{10}}$).

To give a sufficient condition for Sturmian condition more explicit than Corollary~\ref{cor:sturm antisym}, by mimicking  behaviors of the cosine function, we add more properties to the observable $f$ and introduce the following family of functions. A sufficient condition will be given in Theorem~\ref{thm:cosine-like}.
\begin{defn}
  Let $\mclb$ denote the collection of $f\in Lip(\mbbt)$ with the following properties.
    \begin{itemize}
    \item $x\mapsto f(x)+f(x+\tfrac{1}{2})$ is constant on $\mbbt$.
    \item $f(-x)=f(x)$ for $x\in \mbbt$.
    \item $\eta(f)<+\infty$.
    \item $f$ is concave on $[-\frac{1}{4},\frac{1}{4}]$.
  \end{itemize}
\end{defn}

By definition, the following is evident, where $\esssup$ ($\essinf$) means the essential supremum (infimum) relative to Lebesgue measure.
\begin{lem}
$\mclb$ is a subset of $\{f\in C^1(\mbbt): f'\in Lip(\mbbt),f'(0)=0\}$. Moreover,
\[
\eta(f)=\esssup_{x\in\mbbt}f''(x) =-\essinf_{x\in\mbbt}f''(x), \quad \forall\, f\in\mclb.
\]
\end{lem}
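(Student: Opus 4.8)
The plan is to exploit the convexity--concavity structure forced on $f\in\mclb$ by its two symmetries. Write $\eta:=\eta(f)<+\infty$ and, by Lemma~\ref{lem:eta convex}, set $\phi(x):=f(x)+\tfrac{\eta}{2}x^2$, which is convex on $\mbbr$; thus $f$ has one-sided derivatives $f_\pm'$ at every point with $f_-'\le f_+'$ everywhere, and $f_\pm'$ are monotone up to the smooth term $\eta x$. Let $2v$ denote the constant value of $x\mapsto f(x)+f(x+\tfrac12)$. The antisymmetry $f(x+\tfrac12)=2v-f(x)$ yields $f_\pm'(x+\tfrac12)=-f_\pm'(x)$ for all $x$, and the evenness $f(-x)=f(x)$ yields $f_+'(x)=-f_-'(-x)$ for all $x$; I would record both of these at the outset.

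First I would prove $f\in C^1(\mbbt)$. On the arc $(-\tfrac14,\tfrac14)$, concavity of $f$ gives $f_-'\ge f_+'$, which together with $f_-'\le f_+'$ shows $f$ is differentiable there; the antisymmetry relation then transfers differentiability to $(\tfrac14,\tfrac34)$, since $f=2v-f(\cdot-\tfrac12)$ on that arc. At the two seam points $x=\pm\tfrac14$ I would combine the symmetries: applying the antisymmetry relation at $x=-\tfrac14$ gives $f_\pm'(\tfrac14)=-f_\pm'(-\tfrac14)$, while applying the evenness relation at $x=\tfrac14$ gives $f_+'(\tfrac14)=-f_-'(-\tfrac14)$; comparing these forces $f_-'(-\tfrac14)=f_+'(-\tfrac14)$, and hence also $f_-'(\tfrac14)=f_+'(\tfrac14)$. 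So $f$ is differentiable on all of $\mbbt$, making $\phi'=f'+\eta x$ a finite-valued derivative of a convex function, hence continuous, hence $f\in C^1(\mbbt)$. Differentiability at $0$ together with evenness gives $f'(0)=-f'(0)=0$.

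Next I would show $f'\in Lip(\mbbt)$ with $lip(f')\le\eta$. On $[-\tfrac14,\tfrac14]$, concavity makes $f'$ non-increasing while convexity of $\phi$ makes $f'+\eta x$ non-decreasing, so for $x\le y$ in that arc $-\eta(y-x)\le f'(y)-f'(x)\le 0$, i.e.\ $|f'(y)-f'(x)|\le\eta|y-x|$; the relation $f'(x)=-f'(x-\tfrac12)$ carries the same estimate to $[\tfrac14,\tfrac34]$, and these two arcs make up a full period. For the final identities, $f'$ being Lipschitz makes it absolutely continuous with $f''\in L^\infty$. By Lemma~\ref{lem:eta convex}, for $a\ge0$ one has $\eta(f)\le a$ iff $f'+ax$ is non-decreasing iff $f''\ge -a$ a.e.; since $\int_0^1 f''=f'(1)-f'(0)=0$ forces $\essinf f''\le 0$, this gives $\eta(f)=-\essinf_{\mbbt}f''$. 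Concavity on $[-\tfrac14,\tfrac14]$ gives $f''\le0$ a.e.\ there, and the antisymmetry gives $f''(x)=-f''(x-\tfrac12)$ a.e.; splitting $\mbbt=[-\tfrac14,\tfrac14]\cup[\tfrac14,\tfrac34]$ and using these two facts shows $\essinf_{\mbbt}f''=\essinf_{[-\frac14,\frac14]}f''$ and $\esssup_{\mbbt}f''=-\essinf_{[-\frac14,\frac14]}f''$, so $\esssup f''=-\essinf f''=\eta(f)$.

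The only genuinely delicate point is the $C^1$ matching at $x=\pm\tfrac14$, where a concave branch of $f$ meets a convex one; this is exactly the spot where both the evenness and the half-period antisymmetry must be used together. Everything else is routine one-variable convex analysis.
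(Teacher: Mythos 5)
Your argument is correct: the paper offers no written proof (it declares the lemma ``evident'' by definition), and your convex-analysis verification — using $\phi(x)=f(x)+\tfrac{\eta}{2}x^2$ for one-sided derivatives, the two symmetries to match them at the seam points $\pm\tfrac14$, and the a.e.\ characterization $\eta(f)=-\essinf f''$ combined with $f''(x)=-f''(x-\tfrac12)$ — is exactly the routine checking the author leaves to the reader. Nothing is missing; the only genuinely delicate step, the $C^1$ matching at $x=\pm\tfrac14$, is handled correctly by playing the evenness against the half-period antisymmetry.
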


\begin{thm}\label{thm:cosine-like}
Denote $\kappa:=\frac{7}{96}-\frac{\sqrt{3}}{36}>0$. Let $f\in \mclb$ and suppose that
\[
f(0)-f(\tfrac{1}{4})>\kappa\cdot\eta(f).
\]
Then $x\mapsto f(x-\omega)$ satisfies Sturmian condition for any $\omega\in \mbbr$.
\end{thm}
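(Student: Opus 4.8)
The plan is to deduce Theorem~\ref{thm:cosine-like} from Corollary~\ref{cor:sturm antisym} by exhibiting a suitable pair $(a,b)$ for which $f\in\mcla_{a,b}$. By the symmetry properties of $\mclb$, the natural guess is to take a symmetric interval, i.e. $b=-a$ with $a<0<b$ and $b\in(0,\tfrac14)$, so that $[a,b]=[-b,b]$ is centered at the maximum $x=0$ of $f$ on $[-\tfrac14,\tfrac14]$ (recall $f$ is concave there and even). Writing $v$ for the common value $\tfrac12(f(x)+f(x+\tfrac12))$, which equals $\tfrac12(f(0)+f(\tfrac12))=\tfrac12(f(0)+f(0)-?)$... more precisely $v=\tfrac12\big(f(0)+f(\tfrac12)\big)$ and since $f(\tfrac12)=f(-\tfrac12)=f(\tfrac12)$ with $f(\tfrac14)+f(\tfrac34)=2v$ and $f(\tfrac34)=f(\tfrac14)$ by evenness, we get $v=f(\tfrac14)$. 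Likewise $\max_{\mbbt}f=f(0)$ since $f$ is concave and even on $[-\tfrac14,\tfrac14]$ and $f(x+\tfrac12)=2v-f(x)$ is then convex there, so the global max is $f(0)$. Thus the hypothesis $f(0)-f(\tfrac14)>\kappa\,\eta(f)$ is exactly $f(0)-v>\kappa\,\eta(f)$, which will be what feeds condition (A1).

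The core of the argument is then to choose $b$ to simultaneously satisfy (A1) and (A2). For (A2): on $[b,\tfrac12-b]=[b,a+\tfrac12]$ we need $f'(x)<-\tfrac1{12}\eta(f)$ a.e. Since $f\in\mclb\subset C^1$ with $f'(0)=0$ and $f'$ Lipschitz with $\eta(f)=\esssup f''=-\essinf f''$, Lemma~\ref{lem:deri des} (or directly the convexity of $f(x)+\tfrac{\eta(f)}2x^2$) gives $f'(x)\le f'(0)+\eta(f)\cdot(-x)$... no, rather $f'(b)-f'(x)\le\eta(f)(x-b)$, hence the bound on $f'$ over the interval is governed by its value at the left endpoint $b$ together with the concavity of $f$ on $[-\tfrac14,\tfrac14]$; on $[\tfrac14,\tfrac12]$ one uses $f(x)=2v-f(x-\tfrac12)$ with $x-\tfrac12\in[-\tfrac14,0]$, so $f'$ there is controlled by $-f'$ on $[-\tfrac14,0]$, again via the same convexity estimate anchored at $0$. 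The upshot is a lower bound $f(0)-f(b)\ge$ (something like) $\tfrac{|f'(b)|^2}{2\eta(f)}$ from the convexity of $f+\tfrac{\eta}{2}x^2$, so that requiring $|f'(b)|\ge\tfrac16\eta(f)$ (a bit more than $\tfrac1{12}$ to leave room) forces $b$ to be not too small; meanwhile (A1) on $[-b,b]$ reduces to $2f(x)-v-f(0)>\tfrac1{96}\eta(f)$, and since $f$ is concave on $[-b,b]$ with max at $0$, the binding case is $x=\pm b$, giving $2f(b)-v-f(0)>\tfrac1{96}\eta(f)$, i.e. $f(b)>\tfrac12(f(0)+v)+\tfrac1{192}\eta(f)$, which forces $b$ to be not too large. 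Combining a lower bound $f(0)-f(b)\le$ (quadratic in $b$ times $\tfrac{\eta}2$, from $f+\tfrac\eta2 x^2$ convex and $f'(0)=0$) against these two constraints yields a quadratic inequality in the single parameter $b$ whose solvability is exactly equivalent to $f(0)-v>\kappa\,\eta(f)$ with $\kappa=\tfrac7{96}-\tfrac{\sqrt3}{36}$; the $\sqrt3$ and the specific constant should drop out of optimizing the choice of $b$ (equivalently, of the slope threshold one demands at $b$).

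Concretely, I would normalize $\eta(f)=1$ and set $h:=f(0)-v$, so the hypothesis is $h>\kappa$. Using that $\phi(x):=f(x)+\tfrac12 x^2$ is convex with $\phi'(0)=0$, one has for $x\in[0,\tfrac14]$ both $f(x)\le f(0)-\tfrac{c^2}{2}$ type lower bounds and $f(x)\ge f(0)-\tfrac{x^2}{2}$ (from $\phi(x)\ge\phi(0)$, i.e. $f(x)\ge f(0)-\tfrac{x^2}2$) — wait, that last is a lower bound on $f(x)$, which is what (A1) wants. So (A1) at $x=b$ becomes $2(f(0)-\tfrac{b^2}{2})-v-f(0)\ge f(0)-b^2-v=h-b^2>\tfrac1{96}$, i.e. it suffices that $b^2<h-\tfrac1{96}$. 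For (A2), I need a lower bound on $|f'(x)|$ on $[b,\tfrac12-b]$; the worst point is $x=b$ (resp. the reflected endpoint), and from concavity of $f$ on $[0,\tfrac14]$ plus $\phi$ convex, $-f'(b)=\phi'(b)+b$ is at least... here one uses the mean value/convexity to bound $\phi'(b)\ge \tfrac{\phi(b)-\phi(0)}{b}$ in the wrong direction; instead use that $f(0)-f(\tfrac14)=h'$ where $h'\ge h$... hmm, actually $f(\tfrac14)=v$ exactly, so $f(0)-f(\tfrac14)=h$, and $-\int_0^{1/4}f'=h$, with $f'$ decreasing (concavity) and $f'$ having $f''\ge-1$, so $-f'(b)\ge \tfrac{h}{1/4}-(\tfrac14-b)=4h-\tfrac14+b$ after accounting for how much $f'$ can rebound past $b$; more carefully $-f'(b)\ge 4h - (\tfrac14-b)\cdot\tfrac12$ or similar. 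Demanding $-f'(b)\ge\tfrac16$ then reads roughly $4h-\tfrac18+\tfrac b2\ge\tfrac16$, a lower bound on $b$ of the form $b\ge \tfrac13 - 8h$ (for small $h$); but also we need $\tfrac12-b\le\tfrac14$... no, $[b,a+\tfrac12]$ can extend into $[\tfrac14,\tfrac12]$, where $f' = -f'(\cdot-\tfrac12)$ is handled symmetrically, worst at the right endpoint $\tfrac12-b$, giving $f'(\tfrac12-b)=-f'(-b)=f'(b)<-\tfrac16$ — the \emph{same} condition, good. Intersecting $b^2<h-\tfrac1{96}$ with the slope-driven lower bound on $b$ and with $b<\tfrac14$, the feasibility region for $b$ is nonempty precisely when $h>\kappa$; optimizing the intermediate slope threshold (I used $\tfrac16$ above only heuristically — the true optimum balancing the two constraints is where the $\sqrt3$ enters) pins down $\kappa=\tfrac7{96}-\tfrac{\sqrt3}{36}$.

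The main obstacle I anticipate is bookkeeping the estimate on $f'$ over the full interval $[b,a+\tfrac12]$, which straddles $\tfrac14$: on $[b,\tfrac14]$ concavity gives $f'(x)\le f'(b)$ directly, but on $[\tfrac14,\tfrac12-b]$ one must translate via (A0) to $f'(x)=-f'(x-\tfrac12)$ with $x-\tfrac12\in[-\tfrac14,-b]$ and then use concavity/evenness on $[-\tfrac14,0]$; getting the constant $\tfrac1{12}$ in (A2) — rather than (A2) with a worse constant — requires the sharp use of $f''\ge-\eta(f)$ to bound how far $f'(b)$ can have drifted from $f'(\tfrac14)=f'(-\tfrac14)$ relative to the total drop $h$. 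Everything else is convexity of $f+\tfrac{\eta}2x^2$ plus elementary optimization of a quadratic in $b$, so once the $f'$-estimate is pinned down cleanly the value $\kappa=\tfrac7{96}-\tfrac{\sqrt3}{36}$ should fall out of solving a single discriminant-type condition.
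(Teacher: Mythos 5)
Your reduction step is exactly the paper's: take the symmetric interval $[-c,c]$, note $v=f(\tfrac14)$, $\max f=f(0)$, and use evenness, concavity on $[-\tfrac14,\tfrac14]$ and $f'(\tfrac12-x)=f'(x)$ to reduce (A1) to the endpoint inequality $2f(c)-v-f(0)>\tfrac1{96}\eta(f)$ and (A2) to $f'(c)<-\tfrac1{12}\eta(f)$; this is precisely how the paper's proof of Theorem~\ref{thm:cosine-like} begins. The gap is in the quantitative heart of the argument, which the paper isolates as Lemma~\ref{lem:sturm search} and which you leave as a heuristic ("the true optimum \dots is where the $\sqrt3$ enters \dots should fall out"). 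Worse, the specific estimates you sketch do not reach the stated constant. Normalizing $\eta(f)=1$ and writing $h:=f(0)-f$, $H:=h(\tfrac14)$, your sufficient condition for (A1) is $h(c)\le\tfrac{c^2}2$, i.e. $c^2<H-\tfrac1{96}$, and the best pointwise lower bound for (A2) of the kind you describe is $h'(c)\ge 4H-2(\tfrac14-c)^2$ (from $H\le\tfrac14 h'(c)+\tfrac12(\tfrac14-c)^2$), so feasibility requires $\sqrt{H-\tfrac1{96}}+\sqrt{2H-\tfrac1{24}}>\tfrac14$. Numerically this forces $H\gtrsim 0.0278$, strictly worse than $\kappa=\tfrac7{96}-\tfrac{\sqrt3}{36}\approx 0.0248$, and in fact worse than $\tfrac1{4\pi^2}\approx 0.0253$, so your version of the estimate would not even recover Bousch's cosine theorem. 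Also, demanding $-f'(c)\ge\tfrac16$ "to leave room" is both unnecessary and wasteful: strict inequality at a single point $c$ suffices and costs nothing by continuity.

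The missing idea is the paper's choice of the point $c$: let $b\in[0,\tfrac14]$ be maximal with $h'(b)\le\tfrac1{12}$ and take $c$ just beyond $b$. Then on $[0,b]$ one has the sharper bound $h'(x)\le\min\{x,\tfrac1{12}\}$, whence $h(b)\le\tfrac b{12}-\tfrac1{288}$ (not the crude $\tfrac{b^2}2$), while on $[b,\tfrac14]$ the bound $h'(x)\le\tfrac1{12}+x-b$ gives $H\le\tfrac5{288}+\tfrac12(\tfrac14-b)^2$. The hypothesis $H>\kappa$ then forces $b<b_0=\tfrac{5-2\sqrt3}{12}$, and plugging $b_0$ back into the linear-in-$b$ bound for $h(b)$ produces exactly $H-2h(b)>\tfrac1{96}$, with the $\sqrt3$ terms cancelling. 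Without this two-regime estimate anchored at the crossing point of $h'$ with $\tfrac1{12}$, the single-parameter "discriminant" optimization you propose yields a strictly larger threshold, so the proof of the theorem as stated is incomplete.
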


\begin{rmk}\mbox{}
\begin{itemize}
  \item $\frac{7}{96}-\frac{\sqrt{3}}{36}<0.02481<\frac{1}{40}$; on the other hand,
  \[
  \sup_{f\in \mclb} \frac{f(0)-f(\tfrac{1}{4})}{\eta(f)}=\frac{1}{32}= 0.03125,
  \]
  and the supremum is attained at $f\in\mclb$ defined by $f(x)=-x^2$, $x\in[0,\frac{1}{4}]$.
  \item For $f(x)=\cos 2\pi x\in\mclb$, $(f(0)-f(\frac{1}{4}))/\eta(f)=\frac{1}{4\pi^2}>0.02533$.
\end{itemize}

\end{rmk}

\begin{proof}
  Without loss of generality, assume that $f(\frac{1}{4})=0$ for simplicity. It suffices to find $c\in (0,\frac{1}{4})$ such that $f\in \mcla_{a,b}$ for $a=-c$, $b=c$. (A0) is automatically satisfied for $v=0$. Since  $f$ is decreasing on $[0,\frac{1}{2}]$ and since $f(-x)=f(x)$, (A1) is reduced to $2f(c)-f(0)>\frac{1}{96}\eta(f)$. Since $f'$ is decreasing on $[0,\frac{1}{4}]$ and since $f'(\frac{1}{2}-x)=f'(x)$, (A2) is reduced to $f'(c)<-\frac{1}{12}\eta(f)$. On the other hand, $h:=f(0)-f$ satisfies all the assumptions in Lemma~\ref{lem:sturm search} below, and (A1), (A2) are further reduced to (H1), (H2) in Lemma~\ref{lem:sturm search}. Then, applying Lemma~\ref{lem:sturm search} to this $h$, the conclusion follows.
\end{proof}

\begin{lem}\label{lem:sturm search}
  Let $h:[0,\frac{1}{4}]\to\mbbr$ satisfy the following.
\begin{itemize}
  \item $h$ is $C^1$, and $h'$ is Lipschitz and increasing.
  \item $h(0)=h'(0)=0$.
  \item $h(\frac{1}{4})>\kappa\cdot\eta$ for $\kappa:=\frac{7}{96}-\frac{\sqrt{3}}{36}$ and $\eta:=\esssup_{x\in [0,\frac{1}{4}]}h''(x)$.
\end{itemize}
Then there exists $c\in (0,\frac{1}{4})$ such that
\begin{itemize}
  \item [(H1)] $h(\frac{1}{4})-2h(c)>\frac{\eta}{96}$,
  \item [(H2)] $h'(c)>\frac{\eta}{12}$.
\end{itemize}
\end{lem}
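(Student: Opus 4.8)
The plan is to choose $c$ explicitly as a point where $h'$ first reaches a suitable threshold, and then use the hypotheses on $h$ (normalization $h(0)=h'(0)=0$, convexity of $h$ coming from $h'$ increasing, and the bound $h''\le\eta$ a.e.) to control $h(c)$ and $h(\frac14)$ via integration. Concretely, since $h'$ is continuous, increasing, with $h'(0)=0$, and since $h(\frac14)>\kappa\eta>0$ forces $h'$ to be somewhere positive (hence $h'(\frac14)>0$), for each threshold $\tau\in(0,h'(\frac14)]$ there is a well-defined smallest $c=c(\tau)\in(0,\frac14]$ with $h'(c)=\tau$; on $[0,c]$ we have $0\le h'\le\tau$, so $h(c)=\int_0^c h'\le \tau c\le \tfrac\tau4$, while $c\ge \tau/\eta$ from $h'(c)=\int_0^c h''\le \eta c$. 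I would take $\tau$ slightly above $\eta/12$ so that (H2) holds, and then the real content is squeezing (H1).

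For (H1), I would split $h(\frac14)=h(c)+\int_c^{1/4}h'$. On $[c,\frac14]$, $h'\ge \tau$ but also $h'\le \tau+\eta(x-c)$ (again from $h''\le\eta$), so $\int_c^{1/4}h'$ lies between $\tau(\frac14-c)$ and $\tau(\frac14-c)+\tfrac\eta2(\frac14-c)^2$. Combining with $h(c)\le \tau c$, this gives an upper bound for $h(\frac14)$ purely in terms of $\tau$, $c$ and $\eta$; but I also want a \emph{lower} bound on $h(\frac14)-2h(c)$. The cleanest route: since $h(c)\le\tfrac\tau4$ and $h(\frac14)>\kappa\eta$, it suffices that $\kappa\eta-\tfrac\tau2>\tfrac\eta{96}$, i.e. $\tau< 2\eta(\kappa-\tfrac1{96})$. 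So the whole thing reduces to checking that the interval of admissible $\tau$, namely $(\eta/12,\ 2\eta(\kappa-\tfrac1{96}))$, is nonempty and that for such $\tau$ we indeed have $\tau\le h'(\frac14)$ so that $c(\tau)$ exists in $(0,\frac14]$ --- and moreover $c<\frac14$ strictly, which one gets because if $c=\frac14$ then $h(\frac14)=h(c)\le\frac\tau4$, contradicting $h(\frac14)>\kappa\eta>\frac\tau4$ once $\kappa>\tau/(4\eta)$, which holds in our range.

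The arithmetic to verify nonemptiness: $2(\kappa-\tfrac1{96})=2\kappa-\tfrac1{48}=\tfrac7{48}-\tfrac{\sqrt3}{18}-\tfrac1{48}=\tfrac{6}{48}-\tfrac{\sqrt3}{18}=\tfrac18-\tfrac{\sqrt3}{18}$. Numerically $\tfrac18-\tfrac{\sqrt3}{18}\approx 0.125-0.0962=0.0288>\tfrac1{12}\approx 0.0833$? No --- $0.0288<0.0833$, so this naive split is too lossy. This tells me the bound $h(c)\le\tau c\le\tau/4$ is wasteful: I must instead use $h(c)\le\tau c$ with the sharper $c\le \tau/\eta$ only when helpful and, more importantly, use the \emph{concavity-type} interplay between the lower bound $c\ge\tau/\eta$ and the estimate of $\int_c^{1/4}h'$. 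Redoing it: from $h'(c)=\tau$ and $h''\le\eta$ we get $c\ge\tau/\eta$, so $h(c)\le\int_0^c h'\le\int_0^c \eta x\,\dif x=\tfrac\eta2 c^2$, but also $h(c)\le \tau c - \tfrac\eta2 c^2\cdot(\text{correction})$... the precise optimization over the worst-case profile of $h'$ (piecewise linear with slope $0$ then $\eta$) is where the constant $\kappa=\tfrac7{96}-\tfrac{\sqrt3}{36}$ comes from, and carrying out that one-variable optimization (choosing $\tau$, equivalently the breakpoint, optimally) is the main obstacle. I would set up the extremal $h'$ as $h'(x)=\eta\max\{0,x-s\}$ for a parameter $s\in[0,\frac14]$, compute $h(\frac14)=\tfrac\eta2(\frac14-s)^2$, pick $c$ with $h'(c)=\tfrac\eta{12}$ giving $c=s+\tfrac1{12}$, compute $h(c)=\tfrac\eta2\cdot\tfrac1{144}=\tfrac\eta{288}$, and read off that (H1) becomes $\tfrac\eta2(\frac14-s)^2-\tfrac\eta{144}>\tfrac\eta{96}$, i.e. $(\frac14-s)^2>\tfrac1{72}+\tfrac1{48}=\tfrac{5}{144}$; the threshold case $h(\frac14)=\kappa\eta$ corresponds to $(\frac14-s)^2=2\kappa$, and one checks $2\kappa=\tfrac7{48}-\tfrac{\sqrt3}{18}>\tfrac5{144}$, so there is room --- the honest proof replaces this extremal heuristic by the inequality $h(c)\le h(\frac14)-\tau(\frac14-c)$ valid from monotonicity of $h'$, together with $c\le\tau/\eta+ (\text{slack})$, and then minimizes $h(\frac14)-2h(c)$ over all admissible $(c,\tau,h)$, the minimum being achieved at the piecewise-linear profile, which yields exactly $\kappa$. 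I expect this optimization bookkeeping, rather than any conceptual difficulty, to be the crux.
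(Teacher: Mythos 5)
There is a genuine gap: you never actually prove (H1). Your choice of $c$ (the first point past where $h'$ crosses a threshold just above $\eta/12$) is the same as the paper's, and (H2) is fine, but your first concrete route to (H1) — $h(c)\le \tau c\le \tau/4$ together with $h(\tfrac14)>\kappa\eta$ — fails, as you verify yourself ($2(\kappa-\tfrac1{96})=\tfrac18-\tfrac{\sqrt3}{18}\approx 0.0288<\tfrac1{12}$). What remains in your write-up is a heuristic: you posit that the worst case is the profile $h'(x)=\eta\max\{0,x-s\}$ and assert that minimizing $h(\tfrac14)-2h(c)$ over admissible profiles ``yields exactly $\kappa$,'' but this optimization is exactly the content of the lemma and is left undone. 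Moreover your guess at the extremal profile is not the binding one: by your own computation, at the threshold $h(\tfrac14)=\kappa\eta$ that profile gives $(\tfrac14-s)^2=2\kappa\approx 0.0496$, comfortably above the required $\tfrac5{144}\approx 0.0347$, so it has slack. The binding profile for this constant has $h'$ of slope $\eta$ on $[0,\tfrac1{12}]$, then flat at $\eta/12$, then slope $\eta$ again — which is why your later step ``$c\le \tau/\eta+(\text{slack})$'' cannot be justified: there is no a priori upper bound on the crossing point without using the hypothesis $h(\tfrac14)>\kappa\eta$ itself.

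The missing idea (the paper's actual argument, after normalizing $\eta=1$) is to use the hypothesis twice. Let $b$ be maximal with $h'(b)\le\tfrac1{12}$. From $h'(x)\le x$ on $[0,\tfrac1{12}]$, $h'\le\tfrac1{12}$ on $[\tfrac1{12},b]$, and $h'(x)\le\tfrac1{12}+x-b$ on $[b,\tfrac14]$, one gets $h(b)\le\tfrac b{12}-\tfrac1{288}$ and $h(\tfrac14)\le\tfrac5{288}+\tfrac12(\tfrac14-b)^2$. First, combining the latter with $h(\tfrac14)>\kappa$ forces $b<b_0:=\tfrac{5-2\sqrt3}{12}$; second, feeding $b<b_0$ into the bound on $h(b)$ and using $h(\tfrac14)>\kappa$ again gives $h(\tfrac14)-2h(b)>\kappa-\tfrac{b_0}{6}+\tfrac1{144}=\tfrac1{96}$ exactly, after which one takes $c>b$ close to $b$ (so $h'(c)>\tfrac1{12}$ by maximality of $b$, and (H1) persists by continuity). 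Without this step — deducing an upper bound on the crossing point from the size hypothesis on $h(\tfrac14)$ and then re-using that hypothesis in the final estimate — the constant $\tfrac7{96}-\tfrac{\sqrt3}{36}$ is not accounted for, and your proposal does not close.
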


\begin{proof}
  Without loss of generality, we may assume $\eta=1$, because all the assumptions of $h$ are positively homogeneous. Then $h''\le 1$ a.e.. Let $b\in [0,\frac{1}{4}]$ be maximal such that $h'(b)\le \frac{1}{12}$. Since $h'(0)=0$ and $h''\le 1$ a.e., $b>\frac{1}{12}$ and
\[
h'(x)\le
\left\{\begin{array}{ccc}
  x &,& 0\le x\le \tfrac{1}{12} \\
  \tfrac{1}{12}  &,& \tfrac{1}{12}< x\le b \\
  \tfrac{1}{12}+x-b &,& b< x\le \frac{1}{4}
\end{array}\right..
\]
It follows that
  \[
  h(b)= \int_0^b h'(x)\dif x \le \tfrac{b}{12}-\tfrac{1}{288},
  \]
%  \[
%  h(x)= \int_0^x h'(t)\dif t \le \tfrac{1}{2}(x-b)^2+\tfrac{x}{12}-\tfrac{1}{288}, \quad x\in [b,\frac{1}{4}].
%  \]
\[
h(\tfrac{1}{4})=h(b)+\int_b^{\frac{1}{4}}h'(x)\dif x\le \tfrac{5}{288}+\tfrac{1}{2}\cdot(\tfrac{1}{4}-b)^2.
\]
Then from
\[
\tfrac{7}{96}-\tfrac{\sqrt{3}}{36}< h(\tfrac{1}{4})\le \tfrac{5}{288}+\tfrac{1}{2}\cdot(\tfrac{1}{4}-b)^2
\]
we deduce that $b<b_0:=\frac{5-2\sqrt{3}}{12}<\frac{1}{4}$, and hence $h(b)< \frac{b_0}{12}-\frac{1}{288}$. It follows that
\[
h(\tfrac{1}{4}) -2h(b)>\tfrac{7}{96}-\tfrac{\sqrt{3}}{36} - \tfrac{b_0}{6}+\tfrac{1}{144}=\tfrac{1}{96}.
\]
Then the conclusion follows by choosing $c>b$ sufficiently close to $b$.
\end{proof}

Motivated by Theorem~\ref{thm:cosine-like}, let us end this subsection with the following question that might be of interest.

{\bf Question.} What is the optimal lower bound of $\kappa>0$ such that if $f\in\mclb$ and  $f(0)-f(\frac{1}{4})>\kappa\cdot\eta(f)$ holds, then $x\mapsto f(x-\omega)$ satisfies Sturmian condition for any $\omega\in\mbbr$?

\section*{Acknowledgements}

The author thanks Zeng Lian for introducing the research topic of ergodic optimization to him. The author thanks Weixiao Shen for valuable suggestions. The author also thanks Bing Gao for helpful discussions. Finally the author would like to thank the anonymous referee for valuable remarks.

\bibliographystyle{plain}             %    other options: alpha, plain, abbrv ...
\bibliography{Sturmian_v2}

\end{document}